\ProcessOptions \RequirePackage{amsmath}
\def\ep{\varepsilon}
\newcommand{\D}{\mathbb{D}}
\newcommand{\T}{\mathbb{T}}
\newcommand{\N}{\mathbb{N}}
\newcommand{\C}{\mathbb{C}}
\newcommand{\vp}{\vp}
\def\a{\alpha}               
\def\d{\delta}       \def\De{{\Delta}}    \def\e{\varepsilon}
         \def\r{\rho}         \def\z{\zeta}
\newtheorem{theorem}{Theorem}
\newtheorem{lemma}[theorem]{Lemma}
\newtheorem{lettertheorem}{Theorem}
\newtheorem{letterlemma}[lettertheorem]{Lemma}
\newtheorem{letterproposition}[lettertheorem]{Proposition}
\theoremstyle{definition}
\theoremstyle{remark}
\theoremstyle{remarks}
\numberwithin{equation}{section}
\begin{document}
\title[Compact embedding derivatives]
{Compact embedding derivatives of Hardy spaces into Lebesgue spaces}

\author{Jos\'e \'Angel Pel\'aez}

\address{Departamento de An아lisis Matem아tico, Universidad de M아laga, Campus de
Teatinos, 29071 M아laga, Spain} \email{japelaez@uma.es}

\thanks{ The author was supported in part by the Ram\'on y Cajal program
of MICINN (Spain), Ministerio de Edu\-ca\-ci\'on y Ciencia, Spain,
(MTM2011-25502), from La Junta de Andaluc{\'i}a, (FQM210) and
(P09-FQM-4468).}
\date{\today}

\subjclass[2010]{30H10}

\keywords{Hardy spaces, tent spaces, Carleson measures, differentiation operator, compact operators.}

\begin{abstract}
We characterize the positive Borel measures such that the differentiation operator of order $n\in\N\cup\{0\}$ is compact from
the Hardy space $H^p$ into $L^q(\mu)$, $0<p,q<\infty$.
\end{abstract}
\maketitle



\section{Introduction}
Let $\D$ denote the open unit disk of the complex plane and let $\T$
denote the unit circle.
 Also, let $H^p,~0<p<\infty$ denote the
standard Hardy space of analytic functions in $\D$.\par The aim of this paper is to characterize
the positive Borel measures $\mu$ on the unit disc $\D$ such that the differentiation operator
 $D^{(n)}(f)=f^{(n)}$
  is compact from $H^p$ into $L^q(\mu)$, $n\in\N\cup\{0\}$ and $0<p,q<\infty$.

\par The analogous problem for the standard Bergman spaces $A^p_\a$ has been solved \cite{Luecking1985,Lu93,ZhuZhao08}.
The formula $||f||_{A^p_\a}\asymp \sum_{j=0}^{n-1}|f^{(j)}(0)|+||f^{(n)}||_{A^p_{\a+np}}$
 implies that for these spaces the question of when the differentiation operator $D^{(n)}$ from $A^p_\a$ into $L^q(\mu)$ is bounded or compact,
  can be answered once the case $n=0$ is solved. However, this method does not work for Hardy spaces,
   because  such a Littlewood-Paley formula does not exist for $p\neq 2$. Nevertheless,
 an equivalent $H^p$-norm in terms of the $n$th derivative
can be given by using the square functions
$$S_{\sigma,n}f(\zeta)=\left(\int_{\Gamma_\sigma(\zeta)}|f^{(n)}(z)|^2(1-|z|)^{2n-2}\,dA(z)\right)^{1/2}$$
where
 $\Gamma_\sigma(\zeta)$ $=\left\{z\in \D:\,|\arg\z-\arg
    z|<\sigma\left(1-|z|\right)\right\}$ denotes the Stolz angle (lens type region)  with vertex at
$\zeta\in\T$ and aperture $\sigma>0$.
Precisely,  for $0<p<\infty$ \cite{AhernBruna1988,FefSt}
\begin{equation}\label{fsest}\|f\|_{H^p}^p \asymp \sum_{j=0}^{n-1}|f^{(j)}(0)|^p+\int_\T
S^p_{\sigma,n}f(\zeta)\,dm(\zeta)\,.\end{equation} Here and throughout in what
follows $m$ denotes the arclength measure on $\T$.
 In view of \eqref{fsest} and the $H^p$ characterization through the non-tangential maximal function,
    it is natural that
     Luecking~\cite{Lu90,Lu93} employed tent spaces to describe the positive Borel measures such that $D^{(n)}:H^p\to L^q(\mu)$ is bounded.
    It is worth noticing that Coifman, Meyer and Stein~\cite{CMS} introduced the theory of tent spaces in an harmonic analysis context.
    It was further extended by Cohn and Verbitsky~\cite{CV} and has become a very useful tool in operator theory on Hardy spaces
    \cite{CV,CohnFerRochPLMS01}.
\medskip\par Let us now recall some  definitions that will enable to
state 
 the solution to the primary question of this paper.
Let us write $\Gamma_{1/2}(\z)=\Gamma(\z)$ for short, and for each $z\in\D$ let be
$I(z)=\left\{\z\in \T:\,z\in\Gamma(\z)\right\}$
 the related interval.
The Carleson square $S(I)$ based on an
interval $I\subset\T$ is the set $S(I)=\{re^{it}\in\D:\,e^{it}\in I,\,
1-|I|\le r<1\}$, where $|E|$ denotes the Lebesgue measure of $E\subset\T$.

If $|I|<\frac{\pi}{4}$ the tent $T(I)$  is the open subset  of $\D$ bounded by the arc
$I\subset \T$ and two straight lines through
the endpoints of $I$ forming with $I$ an angle of $\frac{\pi}{4}$.
 If $|I|\ge \frac{\pi}{4}$, we set $T(I)=\cup_{J\subset I,\,|J|<1}T(J)\cup\{0\}$. For each  $a\in\D$, let be $S(a)=S(I(a))$ and  $T(a)=T(I(a))$.

For $0<q<\infty$ and a positive Borel measure $\nu$ on $\D$, finite on compact sets, denote
    $
    A^q_{q,\nu}(f)(\z)=\int_{\Gamma(\z)}|f(z)|^q\,d\nu(z)
    $
and $A_{\infty,\nu}(f)(\z)=\nu\textrm{-ess}\sup_{z\in\Gamma(\z)}|f(z)|$. For $0<p<\infty$, $0<q\le\infty$  the tent space $T^p_q(\nu)$ consists of the $\nu$-equivalence classes of $\nu$-measurable functions $f$ such that $\|f\|_{T^p_q(\nu)}=\|A_{q,\nu}(f)\|_{L^p(\T,m)}$ is finite.
 For $0<q<\infty$, define
    $$
    C^q_{q,\nu}(f)(\z)=\sup_{a\in\Gamma(\z)}\frac{1}{|I(a)|}\int_{T(a)}|f(z)|^q (1-|z|)\,d\nu(z),\quad \z\in\T.
    $$
A quasi-norm in the tent space $T^\infty_q(\nu)$ is defined by $\|f\|_{T^\infty_q(\nu)}=\|C_{q,\nu}(f)\|_{L^\infty(\T,m)}$.
\par The following result gives a description of the dual of $T^p_q(\nu)$ \cite{CMS,Lu90}. We also refer to \cite[Theorem $4$]{PelRatMathAnn}
where an analogue was proved for a family of weighted tent spaces on the unit disc.

\begin{lettertheorem}\label{Thm:tent-spaces-duality}
Let $1\le p,q<\infty$ with $p+q\ne2$ and let $\nu$ be a positive Borel measure on~$\D$, finite on compact sets of $\D$. Then the dual of
 $T^p_q(\nu)$ can be identified with $T^{p'}_{q'}(\nu)$ (up to an equivalence of norms) under the pairing
    \begin{equation*}\label{Eq:duality-pairing-tent-spaces}
    \langle f,g \rangle_{T^2_2(\nu)}=\int_\D f(z)\overline{g(z)}(1-|z|)\,d\nu(z).
    \end{equation*}
\end{lettertheorem}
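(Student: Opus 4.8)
The plan is to prove the two inclusions $T^{p'}_{q'}(\nu)\hookrightarrow(T^p_q(\nu))^{*}$ and $(T^p_q(\nu))^{*}\hookrightarrow T^{p'}_{q'}(\nu)$ separately. For the first one I would start from the Fubini identity
\begin{equation*}
\int_\D F(z)\,(1-|z|)\,d\nu(z)\asymp\int_\T\Big(\int_{\Gamma(\zeta)}F(z)\,d\nu(z)\Big)\,dm(\zeta),\qquad F\ge0,
\end{equation*}
which holds because $m(\{\zeta\in\T:\,z\in\Gamma(\zeta)\})=|I(z)|\asymp 1-|z|$. Applying it with $F=|fg|$, then H\"older's inequality on each cone $\Gamma(\zeta)$ with exponents $(q,q')$, and finally H\"older's inequality on $\T$ with exponents $(p,p')$ gives $|\langle f,g\rangle_{T^2_2(\nu)}|\lesssim\|f\|_{T^p_q(\nu)}\,\|g\|_{T^{p'}_{q'}(\nu)}$ (when $q=1$ the cone step is vacuous and $A_{\infty,\nu}$ is used instead). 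Thus $g\mapsto\langle\,\cdot\,,g\rangle_{T^2_2(\nu)}$ is a bounded linear map into $(T^p_q(\nu))^{*}$, and it is injective because the values of the pairing on functions supported in a compact subset of $\D$ already determine $g$ there.

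For the reverse inclusion, fix $\Lambda\in(T^p_q(\nu))^{*}$. I would first note that for compact $K\subset\D$ one has $1-|z|\asymp1$ on $K$, and if $\supp f\subset K$ then $A_{q,\nu}(f)(\zeta)\le\|f\|_{L^q(K,\nu)}$ is supported in an arc of $m$-measure at most $2\pi$; hence $L^q(K,\nu)\hookrightarrow T^p_q(\nu)$ continuously. By $L^q$--$L^{q'}$ duality (using that $\nu|_K$ is finite; for $q=1$ one lands in $L^\infty(K,\nu)$) and exhaustion by compacts $K_n\uparrow\D$ one obtains $g\in L^{q'}_{\mathrm{loc}}(\nu)$ with $\Lambda(f)=\langle f,g\rangle_{T^2_2(\nu)}$ for every compactly supported $f\in T^p_q(\nu)$. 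Since $A_{q',\nu}(g\,\mathbf{1}_{K_n})\uparrow A_{q',\nu}(g)$, it now suffices to prove $\|A_{q',\nu}(g)\|_{L^{p'}(\T)}\lesssim\|\Lambda\|$ under the extra assumption that $g$ is bounded with compact support, so that $A_{q',\nu}(g)\in L^{p'}(\T)$ a priori.

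Assume first $1<q<\infty$. For each $\zeta$ the extremiser in the $L^{q'}(\Gamma(\zeta),\nu)$--$L^q(\Gamma(\zeta),\nu)$ duality is $\varphi_\zeta=g|g|^{q'-2}A_{q',\nu}(g)(\zeta)^{-q'/q}$, which realises $\int_{\Gamma(\zeta)}g\overline{\varphi_\zeta}\,d\nu=A_{q',\nu}(g)(\zeta)$; pairing this with the $L^{p'}(\T)$-extremiser $A_{q',\nu}(g)^{p'-1}/\|A_{q',\nu}(g)\|_{L^{p'}}^{p'-1}$ and rewriting via Fubini exhibits the single test function
\begin{equation*}
f(z)=g(z)\,|g(z)|^{q'-2}\,\frac{1}{1-|z|}\int_{I(z)}\frac{A_{q',\nu}(g)(\zeta)^{\,p'-q'}}{\|A_{q',\nu}(g)\|_{L^{p'}}^{\,p'-1}}\,dm(\zeta),
\end{equation*}
bounded with compact support, for which one checks $\Lambda(f)=\|A_{q',\nu}(g)\|_{L^{p'}(\T)}$. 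Therefore $\|A_{q',\nu}(g)\|_{L^{p'}}\le\|\Lambda\|\,\|f\|_{T^p_q(\nu)}$, and one is left with the task of showing $\|f\|_{T^p_q(\nu)}\lesssim1$. Bounding the average over $I(z)$ by the Hardy--Littlewood maximal operator $M$ on $\T$ (allowed since $I(z)$ contains every $\zeta$ with $z\in\Gamma(\zeta)$ and $|I(z)|\asymp1-|z|$) and using $\int_{\Gamma(\zeta)}|g|^{(q'-1)q}\,d\nu=A_{q',\nu}(g)(\zeta)^{q'}$, this reduces, with $h=A_{q',\nu}(g)$, to an inequality of the type $\int_\T M(h^{p'-q'})^{p}\,h^{q'p/q}\,dm\lesssim\int_\T h^{p'}\,dm$; for $1<p\le q$ (hence $p'\ge q'$, so $h^{p'-q'}$ is a genuine nonnegative function) this is precisely H\"older's inequality followed by the $L^{s}$-boundedness of $M$, the exponents fitting because of the homogeneity relation $p(p'-q')+q'p/q=p'$.

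This disposes of $1<p\le q<\infty$. For $1<q<p<\infty$ I would not repeat the computation: there $1<p'<q'<\infty$, and $T^{p'}_{q'}(\nu)$, which embeds isometrically onto a closed subspace of the reflexive mixed-norm space $L^{p'}\big(\T;L^{q'}(\nu|_{\Gamma(\zeta)})\big)$, is reflexive; the case already proved, applied to the pair $(p',q')$, identifies its dual with $T^p_q(\nu)$, and reflexivity then gives $(T^p_q(\nu))^{*}=T^{p'}_{q'}(\nu)$. The main obstacle is the endpoint regime $p=1$ (so $q>1$, and the dual --- per the stated formula --- is the Carleson-type space $T^\infty_{q'}(\nu)$ defined through $C_{q',\nu}$) and, symmetrically, $q=1$ (so $p>1$): here the cheap linearisation above is not available --- the target condition becomes a Carleson-box condition on the measure $|g|^{q'}(1-|z|)\,d\nu$ rather than a cone condition --- and membership must be established through an atomic / stopping-time decomposition of $T^1_q(\nu)$ (respectively its transpose), in the style of Coifman--Meyer--Stein and Luecking; I expect this step to absorb most of the work. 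Finally, the case $p=q=1$ excluded by the hypothesis $p+q\ne2$ is the one in which $T^1_1(\nu)\cong L^1\big(\D,(1-|z|)\,d\nu\big)$ and the whole duality degenerates to the elementary $L^1$--$L^\infty$ identification.
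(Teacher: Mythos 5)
Your first paragraph contains a genuine error at the endpoint $p=1$, which is precisely the one case where real work is required. When $p=1$ and $1<q<\infty$, the space $T^{p'}_{q'}(\nu)=T^\infty_{q'}(\nu)$ is normed by $\|C_{q',\nu}(g)\|_{L^\infty(\T)}$, the Carleson-box functional, \emph{not} by $\|A_{q',\nu}(g)\|_{L^\infty(\T)}$. Your ``H\"older on cones, then H\"older on $\T$'' argument produces the bound $\|A_{q,\nu}(f)\|_{L^1}\,\|A_{q',\nu}(g)\|_{L^\infty}$, and $\|A_{q',\nu}(g)\|_{L^\infty}$ can be infinite while $\|g\|_{T^\infty_{q'}(\nu)}$ is finite, so the claimed inequality $|\langle f,g\rangle_{T^2_2(\nu)}|\lesssim\|f\|_{T^1_q(\nu)}\|g\|_{T^\infty_{q'}(\nu)}$ does not follow. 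This is not a detail that can be ``absorbed'' into the reverse inclusion: the forward inclusion at $p=1$ is where the Coifman--Meyer--Stein stopping time enters. One defines $h(\z)=\sup\{h: A_{q',\nu}(g|h)(\z)\le C_1C_{q',\nu}(g)(\z)\}$ (truncated cones), proves that the set of $\z\in I(z)$ whose truncated cone still contains $z$ has measure $\gtrsim|I(z)|$, and uses this to replace full cones by truncated ones in the Fubini identity; only then does H\"older yield $C_{q',\nu}(g)$ in place of $A_{q',\nu}(g)$. The paper's own contribution to this theorem (Proposition~B) is exactly this argument; everything else is cited to Coifman--Meyer--Stein and Luecking. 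Your proposal quietly asserts the one step the paper actually proves and defers the rest to ``an atomic / stopping-time decomposition'' that is named but not carried out. (Conversely, the reverse inclusion at $p=1$ is the easy endpoint: testing $\Lambda$ against $g|g|^{q'-2}\chi_{T(a)}$ and one application of H\"older already gives the Carleson-box bound.)

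The parts you do execute are sound and go beyond what the paper writes down. The forward inclusion for $1<p,q<\infty$ and for $q=1<p$ matches the paper's \eqref{p1} and \eqref{p2}. The reverse inclusion for $1<p\le q$ via the explicit test function $f(z)=g|g|^{q'-2}(1-|z|)^{-1}\int_{I(z)}h^{p'-q'}\,dm\,/\,\|h\|_{p'}^{p'-1}$, the maximal function, and the homogeneity identity $p(p'-q')+q'p/q=p'$ checks out, and the reflexivity reduction for $1<q<p$ is legitimate; the paper instead refers this entire direction to Luecking's proof via Benedek--Panzone interpolation. But as written, the proposal both misstates the forward inclusion at $p=1$ and leaves the endpoint cases of the converse as an acknowledged to-do, so it does not constitute a proof of the theorem.
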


For the sake of completeness, and because it is a key to
 describe  those positive Borel measures such that $D^{(n)}: H^p\to L^q(\mu)$ is compact,
 we shall prove in Section \ref{section2} that each $g\in T^{p'}_{q'}(\nu)$ induces a bounded linear functional on $T^p_q(\nu)$.
In the proof for $p=1$, a stopping time involving $A_{q,\nu}(f)$ and $C_{q,\nu}(f)$
is a fundamental step.  Let $\Delta(a,r)$ and $D(a,r)$ respectively denote the
pseudohyperbolic and Euclidean discs of center $a$ and radius~$r$.
Our main result is the following.
\begin{theorem}\label{Theorem:DifferentiationOperatorcompact}
Let $0<p,q<\infty$,  $n\in\N$ and $\mu$ be a positive Borel measure on~$\D$. Further, let $dh(z)=dA(z)/(1-|z|^2)^2$ denote the hyperbolic measure.
\begin{itemize}
\item[\rm(a)] If $p\ge q$, $D^{(n)}:H^p\to L^q(\mu)$ is compact if and only if, for any fixed $r\in(0,1)$, the function
    $$
    \Phi_\mu(z)=\frac{\mu(\Delta(z,r))}{(1-|z|)^{1+qn}},\quad z\in\D,
    $$
satisfies that
    \begin{enumerate}
    \item[\rm(i)] $\Phi_\mu\in T^\frac{p}{p-q}_{\frac{2}{2-q}}\left(h\right)$, if $q<\min\{2,p\}$;
    \item[\rm(ii)] $\lim_{|a|\to 1^-} \frac{1}{|I(a)|}\int_{T(a)}|\Phi_\mu(z)|^{\frac{2}{2-p}} \,\frac{dA(z)}{ 1-|z|}=0$, if $q=p<2$;
    \item[\rm(iii)] $\lim_{R\to 1^-}\int_\T\left(\sup_{z\in{\Gamma(\z)\setminus\overline{D(0,R)}}}\Phi_\mu(z)\right)^{\frac{p}{p-q}}\,dm(\z)=0$, if $2\le q<p$.
    \end{enumerate}
    \medskip
\item[\rm(b)] If either $q>p$ or $2\le q=p$, the following conditions are equivalent:
\begin{enumerate}
\item[\rm(i)] $D^{(n)}:H^p\to L^q(\mu)$ is compact;
\item[\rm(ii)] $\lim_{|z|\to 1^-}\frac{\mu\left(S(z)\right)}{(1-|z|)^{\frac{q}{p}+nq}}=0$ ;
\item[\rm(iii)] $\lim_{|z|\to 1^-}\frac{\mu\left(\Delta(z,r)\right)}{(1-|z|)^{\frac{q}{p}+nq}}=0$  for any fixed $r\in(0,1)$.
\end{enumerate}
\end{itemize}
\end{theorem}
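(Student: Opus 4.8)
The engine is the square--function equivalence \eqref{fsest}. Putting $d\nu_n(z)=(1-|z|)^{2n-2}\,dA(z)$ we have $S_{1/2,n}f=A_{2,\nu_n}(f^{(n)})$, so \eqref{fsest} reads
\[
\|f\|_{H^p}\asymp\sum_{j=0}^{n-1}|f^{(j)}(0)|+\|f^{(n)}\|_{T^p_2(\nu_n)},
\]
and, the finite--rank polynomial part being harmless, $D^{(n)}\colon H^p\to L^q(\mu)$ is compact precisely when $g\mapsto g$ is a compact map from the space of $n$th derivatives of $H^p$--functions (normed by $\|\cdot\|_{T^p_2(\nu_n)}$) into $L^q(\mu)$. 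Since bounded subsets of $H^p$ are normal families, the operator associated with $\mu|_{\overline{D(0,R)}}$ is always compact (bounded sets go to relatively compact sets of $C(\overline{D(0,R)})$, into which $f\mapsto f^{(n)}$ and the inclusion into $L^q(\mu)$ are continuous); splitting $\mu=\mu|_{\overline{D(0,R)}}+\mu|_{\D\setminus\overline{D(0,R)}}$ and using that a bounded sequence has a subsequence converging uniformly on compacta to an $H^p$--function, one gets, with $\eta(R):=\sup_{\|f\|_{H^p}\le1}\int_{\D\setminus\overline{D(0,R)}}|f^{(n)}|^q\,d\mu$,
\[
D^{(n)}\colon H^p\to L^q(\mu)\ \text{ is compact}\iff D^{(n)}\ \text{ is bounded and}\ \lim_{R\to1^-}\eta(R)=0 .
\]
The whole proof then consists in estimating $\eta(R)$ from above (sufficiency) and from below (necessity) in each regime.

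\textbf{Part (b) ($q>p$ or $2\le q=p$).} Here (ii)$\Leftrightarrow$(iii) is a routine covering: $\Delta(z,r)\subset S(z')$ for a comparable $z'$ yields (iii) from (ii), while $S(z)$ is a union of pseudohyperbolic discs $\Delta(z_j,r)$ with $\#\{j:1-|z_j|\asymp2^{-k}(1-|z|)\}\asymp2^k$, and the series $\sum_k2^k(2^{-k})^{q/p+nq}$ converges because $q\ge p$ and $n\ge1$ force $q/p+nq\ge1+q>1$, which yields (ii) from (iii). For (ii)$\Rightarrow$(i): a Whitney decomposition $\{Q\}$ of $\D\setminus\overline{D(0,R)}$ together with $|f^{(n)}(z)|\lesssim(1-|z|)^{-n}\sup_{\Delta(z,c)}|f|$, the bound $\mu(Q)\lesssim\mu(S(a_Q))\le(1-|a_Q|)^{q/p+nq}\sup_{|z|>R}\mu(S(z))(1-|z|)^{-q/p-nq}$, and $\|f\|_{H^p}\asymp\|N(f)\|_{L^p(\T)}$ for the non--tangential maximal function $N(f)$, gives, on collapsing an $\ell^q$-- to an $\ell^p$--sum (legitimate since $q\ge p$),
\[
\int_{\D\setminus\overline{D(0,R)}}|f^{(n)}|^q\,d\mu\lesssim\|f\|_{H^p}^q\sup_{|z|>R}\frac{\mu(S(z))}{(1-|z|)^{q/p+nq}},
\]
so $\eta(R)\to0$ by (ii). For (i)$\Rightarrow$(iii): apply the compactness criterion to the unit--norm family $f_a(z)=(1-|a|^2)^b(1-\overline az)^{-b-1/p}$ ($b$ large), which tends to $0$ uniformly on compacta as $|a|\to1$; since $|f_a^{(n)}|\gtrsim(1-|a|)^{-1/p-n}$ on $\Delta(a,1/4)$, compactness forces $\mu(\Delta(a,r))(1-|a|)^{-q/p-nq}\to0$.

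\textbf{Part (a) ($p\ge q$).} One works directly with the tent--space embedding $\|f^{(n)}\|_{T^p_2(\nu_n)}\to L^q(\mu)$, using $\int_\D H\,d\mu\asymp\int_\T\big(\int_{\Gamma(\z)}|I(z)|^{-1}H(z)\,d\mu(z)\big)\,dm(\z)$. When $2\le q<p$ (case (iii)) one factors out $M_nf(\z)=\sup_{z\in\Gamma(\z)}|f^{(n)}(z)|(1-|z|)^n$ (with $\|M_nf\|_{L^p}\lesssim\|f\|_{H^p}$), discretizes the remaining $|f^{(n)}|^2$--integral against $S^2_{\sigma,n}f$, and applies H\"older with exponents $\big(\tfrac{p}{q-2},\tfrac{p}{p-q},\tfrac{p}{2}\big)$ to get $\int_\D|f^{(n)}|^q\,d\mu\lesssim\|f\|_{H^p}^q\,\|\Phi_\mu\|_{T^{p/(p-q)}_\infty(h)}$. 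When $q<2$ (cases (i), (ii)) one runs Luecking's randomization, $\int_\D|f^{(n)}|^q\,d\mu\asymp\sum_k|f^{(n)}(a_k)|^q\mu(\Delta(a_k,r))$ over a lattice, Khinchine's inequality and tent--space duality (Theorem~\ref{Thm:tent-spaces-duality}, whose $p=1$ endpoint rests on the stopping--time argument of Section~\ref{section2}) identifying the dual weight and producing $\int_\D|f^{(n)}|^q\,d\mu\lesssim\|f\|_{H^p}^q\,\|\Phi_\mu\|_{T^{p/(p-q)}_{2/(2-q)}(h)}$ when $q<p$ and $\int_\D|f^{(n)}|^q\,d\mu\lesssim\|f\|_{H^p}^q\,\|\Phi_\mu\|_{T^\infty_{2/(2-p)}(h)}$ when $q=p$. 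Writing $X$ for the tent space in the relevant condition, the inequality $\int_\D|f^{(n)}|^q\,d\mu\lesssim\|f\|_{H^p}^q\|\Phi_\mu\|_X$ localizes: replacing $\mu$ by $\mu|_{\D\setminus\overline{D(0,R)}}$ replaces $\|\Phi_\mu\|_X$ by $\|\Phi_{\mu|_{\D\setminus\overline{D(0,R)}}}\|_X\le\|\Phi_\mu\,\mathbf1_{\{|z|\ge R'\}}\|_X$ with $R'=R'(R)\to1$. Hence in case (i) (both indices of $X$ finite) $\Phi_\mu\in X$ forces $\|\Phi_\mu\,\mathbf1_{\{|z|\ge R'\}}\|_X\to0$ by dominated convergence, so (bounded)$\Rightarrow\lim\eta(R)=0\Rightarrow$(compact), while (compact)$\Rightarrow$(bounded)$\Rightarrow\Phi_\mu\in X$ by Luecking's embedding theorem \cite{Lu90,Lu93}; in cases (ii), (iii) the relation $\lim\eta(R)=0$ unwinds, through the localized inequality and its reverse, to exactly the stated vanishing statement. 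The reverse estimate --- producing, from a cone or a single tent on which $\Phi_\mu$ is large near $\T$, a unit--norm $f\in H^p$ with $\int_\D|f^{(n)}|^q\,d\mu$ comparable to the offending quantity --- is the classical random/lacunary test--function construction of Luecking and Cohn--Verbitsky \cite{Lu90,Lu93,CV}, again invoking Theorem~\ref{Thm:tent-spaces-duality} and the Section~\ref{section2} stopping time when $p=1$.

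\textbf{Main obstacle.} The hard part is the boundedness theory behind Part (a) --- the embedding inequalities and, especially, their reverses. For $q\ge2$ the extraction of $M_nf$ makes the upper bound transparent, but for $q<2$ one must push Luecking's Khinchine argument through at the level of the $n$th derivative and invoke tent--space duality, whose $p=1$ endpoint is precisely where the stopping--time argument of Section~\ref{section2} is indispensable; and the reverse inequalities in cases (ii)--(iii) --- manufacturing weakly null $H^p$--functions that detect the boundary behaviour of $\Phi_\mu$ without leaking mass into $\overline{D(0,R)}$ --- are the subtlest point. A further technical nuisance is that $f\mapsto (1-|z|)^{n-1}f^{(n)}(z)$ is not onto $T^p_2(dA)$, so every discretization must be phrased through point evaluations of the analytic function $f^{(n)}$ rather than by treating $f^{(n)}$ as an arbitrary tent--space element; and for $0<p<1$ --- which under $p\ge q$ forces $q\le p<1$ --- Theorem~\ref{Thm:tent-spaces-duality} is unavailable and one reverts to the quasi--Banach forms of Luecking's estimates.
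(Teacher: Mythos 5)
Your overall architecture for part (a) is the paper's: the subharmonicity estimate, the reduction of sufficiency to a tent-space pairing of $|f'|^q(1-|z|)^q$ (or the square function) against $\Phi_\mu$ via Theorem~\ref{Thm:tent-spaces-duality} and the $p=1$ stopping time, and necessity via Luecking's lattice/Khinchine randomization. But you have only named these steps; the substantive content of the paper's proof --- the uniform tail estimate for the truncated operators $S_{\lambda,R}(f)=\sum_{|z_k|\ge R}f(z_k)\bigl(\frac{1-|z_k|}{1-\overline{z}_kz}\bigr)^{\lambda}$, which is precisely what converts compactness into a vanishing statement localized to $\{|z|>R\}$ before Khinchine and duality are applied --- is absent, and "the classical random/lacunary test-function construction" does not by itself produce the $R$-uniformity needed for conditions (ii) and (iii).

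The genuine gap is in part (b), (ii)$\Rightarrow$(i). Your Whitney-decomposition argument does not close: after bounding $\mu(Q)(1-|a_Q|)^{-nq}\sup_{\widetilde Q}|f|^q\le\ep\,(1-|a_Q|)^{q/p}\inf_{I(a_Q)}N(f)^q\le\ep\bigl(\int_{I(a_Q)}N(f)^p\,dm\bigr)^{q/p}$, the collapse $\ell^{q/p}\hookrightarrow\ell^1$ leaves you with $\sum_Q\int_{I(a_Q)}N(f)^p\,dm$, and since every $\zeta\in\T$ lies in $I(a_Q)$ for one cube $Q$ of \emph{each} dyadic generation, this sum is $\asymp\sum_k\|N(f)\|_{L^p(\T)}^p=\infty$. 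For $q>p$ strictly this can be repaired (e.g.\ by the distribution-function proof of the $q/p$-Carleson embedding applied to $(1-|z|)^{-nq}d\mu$, whose Carleson constant is controlled by a geometric series $\sum_k2^{k(1-q/p)}$, or by the paper's route: choose $s\in(p,q)$, apply Minkowski's integral inequality with exponent $q/s>1$, and reduce to Duren's theorem for the auxiliary measure $d\mu^\star=(1-|\zeta|)^{s/p-2}dA$). But for $q=p$ that series diverges, the collapse is vacuous, and --- decisively --- your argument never uses the hypothesis $q\ge2$. Since the implication (ii)$\Rightarrow$(i) is \emph{false} for $q=p<2$ (that is exactly why part (a)(ii) gives a different, stronger characterization there), any proof that does not invoke $p\ge2$ cannot be correct. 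The paper's treatment of $q=p\ge2$ goes through the inequality \eqref{Eq:Laplacian-first-derivative} and the Hardy--Stein--Spencer identity, and $p\ge2$ enters precisely through the subharmonicity of $\triangle|f|^p$; some substitute for that mechanism is indispensable in your write-up.
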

\par As for $n=0$,  $I_d: H^p\to L^q(\mu)$ is compact if and only if
$\lim_{|z|\to 1^-}\frac{\mu\left(S(z)\right)}{(1-|z|)^{\frac{q}{p}}}=0$, whenever $0<p\le q<\infty$ \cite{BlJa05}. In the previous condition
 $\mu\left(S(z)\right)$ may be replaced by $\mu\left(\Delta(z,r)\right)$ if $p<q$. In the triangular case $0<q<p<\infty$,
  $I_d: H^p\to L^q(\mu)$ is bounded if and only   the function
    $
    B_\mu(\z)=\int_{\Gamma(\z)}\frac{d\mu(z)}{1-|z|}
    $
belongs to $L^{\frac{p}{p-q}}(\T,m)$ \cite{Lu90}. For this last range of values
it is probably known, at least to experts working on the field, that
  $I_d: H^p\to L^q(\mu)$ is compact if and only it is  bounded. Since we were not able to
find a proof in the existing literature, we include a proof here.

\begin{theorem}\label{carlesonq<p}
Let $0<q<p<\infty$
and let $\mu$ be a positive Borel measure on $\D$. Then the following conditions are equivalent:
\begin{enumerate}
\item[\rm(i)] $I_d:H^p\to L^q(\mu)$ is compact;
\item[\rm(ii)] $I_d:H^p\to L^q(\mu)$ is bounded;
\item[\rm(iii)] The function
    $
    B_\mu(\z)=\int_{\Gamma(\z)}\frac{d\mu(z)}{1-|z|}
    $
belongs to $L^{\frac{p}{p-q}}(\T,m)$.
\end{enumerate}
\end{theorem}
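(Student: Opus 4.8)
The plan is to prove the cycle $(ii)\Rightarrow(iii)\Rightarrow(i)\Rightarrow(ii)$, with the middle implication carrying the real content. The equivalence $(ii)\Leftrightarrow(iii)$ is Luecking's theorem, quoted in the excerpt, so strictly I only need $(iii)\Rightarrow(i)$ and the trivial $(i)\Rightarrow(ii)$ (a compact operator between quasi-Banach spaces is bounded). So the heart of the matter is: assuming $B_\mu\in L^{p/(p-q)}(\T,m)$, show $I_d:H^p\to L^q(\mu)$ is compact, i.e. that it maps every bounded sequence $(f_k)\subset H^p$ with $f_k\to 0$ uniformly on compact subsets of $\D$ to a sequence with $\|f_k\|_{L^q(\mu)}\to 0$. (That this sequential condition suffices for compactness in this setting is standard, via normal families; I would state it as a preliminary reduction.)

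To carry out $(iii)\Rightarrow(i)$, fix $R\in(0,1)$ and split $\mu=\mu|_{\overline{D(0,R)}}+\mu|_{\D\setminus\overline{D(0,R)}}=:\mu_R'+\mu_R''$. For the first piece, $\int_{\D}|f_k|^q\,d\mu_R'\le \|f_k\|^q_{H^\infty(D(0,R))}\,\mu(\D)\to 0$ as $k\to\infty$ for each fixed $R$, since $f_k\to 0$ uniformly on $\overline{D(0,R)}$ and $\mu$ is finite (finiteness of $\mu$ follows from $(ii)$, or one may instead bound by the sup of $|f_k|$ on the compact set and use local finiteness together with $B_\mu\in L^1$). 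For the tail piece, I want a bound of the form $\int_\D |f|^q\,d\mu_R''\le C\,\|f\|^q_{H^p}\,\varepsilon(R)$ with $\varepsilon(R)\to 0$ as $R\to1^-$, \emph{uniformly} in $f$. Granting such a bound, a standard $\varepsilon/2$ argument (first choose $R$ so the tail term is $<\varepsilon/2$ for all $k$, then $k$ large so the compact part is $<\varepsilon/2$) finishes the proof.

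The crux is therefore this uniform tail estimate, and it should come from inspecting Luecking's proof of $(iii)\Rightarrow(ii)$ rather than using $(ii)$ as a black box. The mechanism: by the tent-space / area-function description underlying \eqref{fsest} (with $n=0$), one has $\int_\D|f|^q\,d\mu\lesssim \int_\T A^q_{q,\mu}(f)(\zeta)\,dm(\zeta)$ after an averaging (Fubini) step that replaces the pointwise integrand by its nontangential average, and then Hölder with exponents $p/q$ and $p/(p-q)$ gives $\int_\T A^q_{q,\mu}(f)\,dm\le \Big(\int_\T (N f)^p\,dm\Big)^{q/p}\big\|B_\mu\big\|_{L^{p/(p-q)}}\lesssim \|f\|_{H^p}^q\|B_\mu\|_{L^{p/(p-q)}}$, where $Nf$ is the nontangential maximal function. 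Redoing this with $\mu$ replaced by $\mu_R''$ replaces $\|B_\mu\|_{L^{p/(p-q)}}$ by $\|B_{\mu_R''}\|_{L^{p/(p-q)}}=\big(\int_\T \big(\int_{\Gamma(\zeta)\setminus\overline{D(0,R)}}\frac{d\mu(z)}{1-|z|}\big)^{p/(p-q)}dm(\zeta)\big)^{(p-q)/p}$, and this tends to $0$ as $R\to1^-$ by dominated convergence, since the integrand is dominated by $B_\mu(\zeta)^{p/(p-q)}\in L^1$ and, for a.e.\ $\zeta$, decreases to $0$ as the truncation exhausts $\Gamma(\zeta)$. So the main obstacle is purely technical: carefully reproducing the Fubini/Hölder chain with a truncated measure and verifying the dominated-convergence step; no genuinely new idea beyond Luecking's original argument is needed, which is presumably why the result is ``probably known to experts.''
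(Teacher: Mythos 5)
Your proposal is correct and follows essentially the same route as the paper: the paper also quotes Luecking for $(ii)\Leftrightarrow(iii)$ and proves $(iii)\Rightarrow(i)$ by showing $\lim_{R\to1^-}\int_\T\bigl(\int_{\Gamma(\z)\setminus\overline{D(0,R)}}\frac{d\mu(z)}{1-|z|}\bigr)^{\frac{p}{p-q}}dm(\z)=0$ via dominated convergence and then combining this with the nontangential maximal function characterization of $H^p$ and the same split/H\"older argument you describe. The only (cosmetic) difference is that the paper justifies the dominated-convergence step by passing through the equivalent kernel expression $\Psi_\mu(\z)=\int_\D\bigl(\frac{1-|z|}{|1-\overline{\z}z|}\bigr)^{\lambda}\frac{d\mu(z)}{1-|z|}$, whereas you apply it directly to the truncations of $B_\mu$.
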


Throughout the paper  $\frac{1}{p}+\frac{1}{p'}=1$. Further, the letter $C=C(\cdot)$ will denote an
absolute constant whose value depends on the parameters indicated
in the parenthesis, and may change from one occurrence to another.
We will use the notation $a\lesssim b$ if there exists a constant
$C=C(\cdot)>0$ such that $a\le Cb$, and $a\gtrsim b$ is understood
in an analogous manner. In particular, if $a\lesssim b$ and
$a\gtrsim b$, then we will write $a\asymp b$.

\section{Preliminary  results}\label{section2}

\begin{letterproposition}\label{Prop:tent-spaces-duality}
Let $1\le p,q<\infty$ with $p+q\ne2$ and let $\nu$ be a positive Borel measure on~$\D$, finite on compact sets of $\D$.
Then, there exists a positive constant $C$ such that
    \begin{equation*}
    \left|\langle f,g \rangle_{T^2_2(\nu)}\right|\le C||f||_{T^p_q(\nu)}\|g\|_{T^{p'}_{q'}(\nu)}
    \end{equation*}
    for any $f\in T^p_q(\nu)$ and $g\in T^{p'}_{q'}(\nu)$.
\end{letterproposition}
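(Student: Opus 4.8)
The plan is to establish the pairing inequality by treating the three cases $p=q$, $p>q$, and $p<q$ separately, following the classical Coifman--Meyer--Stein scheme adapted to the disc. For $p=q=2$ the statement is excluded by hypothesis, so in the diagonal case $p=q\neq 2$ the inequality $|\langle f,g\rangle_{T^2_2(\nu)}|\le C\|f\|_{T^p_q(\nu)}\|g\|_{T^{p'}_{q'}(\nu)}$ (with $p'=q'$) reduces, via the standard identity $\int_\D F(z)(1-|z|)\,d\nu(z)\asymp\int_\T\int_{\Gamma(\z)}F(z)\,d\nu(z)\,dm(\z)$ valid for nonnegative $F$, to a pointwise application of Hölder's inequality in the $z$-variable on each Stolz region $\Gamma(\z)$ followed by Hölder in the $\z$-variable on $\T$ with exponents $p,p'$. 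First I would record that Fubini-type identity (comparing the "area integral" and the measure integral), since it is the workhorse of every case.

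For $1<p<\infty$ with $p\neq q$ (so in particular $p\neq 2$), I would run the argument through the $A$ and $C$ functionals. Writing $\langle f,g\rangle$ as an integral against $(1-|z|)\,d\nu$, I convert it to $\int_\T \big(\int_{\Gamma(\z)}|f(z)g(z)|\,d\nu(z)\big)\,dm(\z)$ up to constants, then apply Hölder pointwise on $\Gamma(\z)$ with exponents $q,q'$ to bound the inner integral by $A_{q,\nu}(f)(\z)\,A_{q',\nu}(g)(\z)$, and finally Hölder on $\T$ with exponents $p,p'$. This gives $|\langle f,g\rangle_{T^2_2(\nu)}|\lesssim \|A_{q,\nu}(f)\|_{L^p}\|A_{q',\nu}(g)\|_{L^{p'}}=\|f\|_{T^p_q(\nu)}\|g\|_{T^{p'}_{q'}(\nu)}$, which is exactly the claim since both $p',q'\in(1,\infty)$ here. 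The same computation covers $p>q>1$ as well; note that when $p>q$ we may still have $p'<q'$, but the two-step Hölder argument does not care about the ordering of $q$ and $q'$, only that each pair of conjugate exponents multiplies to $1$.

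The genuinely delicate case is $p=1$ (forcing $q\neq 1$, hence $q'=\infty$ or $q>1$ with $p'=\infty$), where the target is $|\langle f,g\rangle_{T^2_2(\nu)}|\lesssim\|f\|_{T^1_q(\nu)}\|g\|_{T^\infty_{q'}(\nu)}$. As the excerpt itself signals, the main obstacle here is the stopping-time argument: one cannot simply use Hölder on $\T$ because $L^1$ has no usable dual exponent inside the integral, so instead I would decompose $\D$ using a stopping time built simultaneously from $A_{q,\nu}(f)$ and $C_{q',\nu}(g)$. Concretely, fix the "good" set $O_\lambda=\{\z:A_{q,\nu}(f)(\z)>\lambda\}$ and its tent-region sawtooth; on the complement of the union of tents over the dyadic generations one controls $A_{q,\nu}(f)$ pointwise, and on each selected top interval $I$ one uses that $\frac{1}{|I|}\int_{T(I)}|g|^{q'}(1-|z|)\,d\nu\le\|g\|_{T^\infty_{q'}(\nu)}^{q'}$ by definition of the $C$-functional together with the fact that $T(I)\subset\Gamma(\z)$ for $\z$ in a fixed fraction of $I$. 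Splitting $\int_\D f\bar g\,(1-|z|)\,d\nu$ over these regions, applying Hölder with exponents $q,q'$ inside each piece, summing the geometric series in $\lambda$ (dyadic levels), and using $\sum_I|I|\lesssim m(O_\lambda)$ together with the layer-cake formula $\|A_{q,\nu}(f)\|_{L^1}=\int_0^\infty m(O_\lambda)\,d\lambda$, yields the bound $\lesssim\|f\|_{T^1_q(\nu)}\|g\|_{T^\infty_{q'}(\nu)}$. I would carry out this last case in full detail, as the other cases are the routine double application of Hölder; the one subtlety to check carefully in the stopping-time case is the geometric relationship between the selected intervals, their tents, and the Stolz cones over the "good" boundary set, which is where the aperture normalization $\sigma=1/2$ and the $\pi/4$ angle in the definition of $T(I)$ are used.
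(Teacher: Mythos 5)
Your easy cases ($1<p<\infty$, any $q\ge1$) coincide with the paper's: Fubini to pass to $\int_\T\int_{\Gamma(\z)}|fg|\,d\nu\,dm$, then H\"older with exponents $q,q'$ on the cone and $p,p'$ on $\T$. In the hard case $p=1$, however, you take a genuinely different route. The paper runs the stopping time on $g$: it defines truncated cones $\Gamma^{h(\z)}(\z)$ with $h(\z)=\sup\{h:A_{q',\nu}(g|h)(\z)\le C_1C_{q',\nu}(g)(\z)\}$, proves the key inequality $\int_\D k(z)(1-|z|)\,d\nu\lesssim\int_\T\int_{\Gamma^{h(\z)}(\z)}k\,d\nu\,dm$ via the geometric density estimate $|I(z)\cap H(z)|\ge\frac12|I(z)|$, and then finishes with a single H\"older on each truncated cone followed by $L^1$--$L^\infty$ H\"older on $\T$; no dyadic summation is needed. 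You instead run a Calder\'on--Zygmund-type decomposition on $f$: level sets $O_\lambda=\{A_{q,\nu}(f)>\lambda\}$, tents over their components, H\"older with $q,q'$ on each sawtooth region, the trivial tent bound from $C_{q',\nu}(g)$, and a geometric series plus the layer-cake formula. Both are classical (Coifman--Meyer--Stein) and both work; the paper's version trades your dyadic bookkeeping for the one-shot density estimate, while yours trades the density estimate for the tent-over-open-set machinery. The one detail you should not gloss over is that ``controlling $A_{q,\nu}(f)$ pointwise on the complement of the next generation of tents'' does not by itself bound $\int_{T(O_{2^k})\setminus T(O_{2^{k+1}})}|f|^q(1-|z|)\,d\nu$: you need to replace $O_{2^{k+1}}$ by the maximal-function enlargement $O_{2^{k+1}}^*=\{M(\chi_{O_{2^{k+1}}})>\frac12\}$ so that $z\notin T(O_{2^{k+1}}^*)$ gives $|I(z)\setminus O_{2^{k+1}}|\gtrsim|I(z)|$, and then Fubini against $\z\in O_{2^k}^*\setminus O_{2^{k+1}}$ yields the bound $\lesssim 2^{(k+1)q}\,m(O_{2^k})$; this enlargement (the analogue of the paper's estimate \eqref{Eq:stopping-time-set-estimate}) is where the aperture of $\Gamma$ and the geometry of $T(I)$ actually enter, and without it the $k$-th term of your series is not controlled.
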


\begin{proof}
If $1<p,q<\infty$, then Fubini's theorem and two applications of H\"older's inequality give
    \begin{equation}
    \begin{split}\label{p1}
    |\langle f,g \rangle_{T^2_2(\nu)}|
    &\le\int_\T A_{q,\nu}(f)(\z) A_{q',\nu}(g)(\z)\,dm(\z)
   =\|f\|_{T^p_q(\nu)}\|g\|_{T^{p'}_{q'}(\nu)}.
    \end{split}
    \end{equation}
If $q=1$ and $1<p<\infty$, then H\"older's inequality yields
    \begin{equation}
    \begin{split}\label{p2}
    |\langle f,g \rangle_{T^2_2(\nu)}|
    &\le\int_\T A_{1,\nu}(f)(\z)A_{\infty,\nu}(g)(\z)\,dm(\z)
    =\|f\|_{T^p_1(\nu)}\|g\|_{T^{p'}_{\infty}(\nu)}.
    \end{split}
    \end{equation}
Let now $p=1$ and $1<q<\infty$. For $\z\in\T$ and $0\le h\le\infty$, let
    \begin{equation*}
    \begin{split}
    \Gamma^h(\z)&=\Gamma(\z)\setminus \overline{D\left(0,\frac{1}{1+h}\right)}
    =\left\{z\in\D:|\arg z-\arg \z|<\frac{1-\left|z\right|}{2}<\frac{h}{2(1+h)}\right\}
    \end{split}
    \end{equation*}
and
    $$
    A^{q'}_{q',\nu}(g|h)(\z)=\int_{\Gamma^h(\z)}|g(z)|^{q'}\,d\nu(z),\quad \z\in\T.
    $$
For every $g\in T^\infty_{q'}(\nu)$ and $\z\in\T$, define the stopping time by
    $$
    h(\z)=\sup\left\{h:A_{q',\nu}(g|h)(\z)\le C_1C_{q',\nu}(g)(\z)\right\},
    $$
where $C_1>0$ is a large constant to be determined later. Assume for a moment that there exists a constant $C_2>0$ such that
    \begin{equation}\label{Eq:stopping-time}
    \int_\D k(z)(1-|z|)\,d\nu(z)\le C_2\int_\T\left(\int_{\Gamma^{h(\z)}(\z)}k(z)\,d\nu(z)\right)\,dm(\z)
    \end{equation}
for all $\nu$-measurable non-negative functions $k$.  Then, applying H\"older's inequality
    \begin{equation}\label{Eq:CMS-estimate}
    \begin{split}
    |\langle f,g\rangle_{T^2_2(\nu)}|
    &\le C_2\int_\T\left(\int_{\Gamma^{h(\z)}(\z)}|f(z)||g(z)|\,d\nu(z)\right)\,dm(\z)\\
    &\le C_1C_2\int_\T A_{q,\nu}(f)(\z) C_{q',\nu}(g)(\z)
    \,dm(\z)\\
    &\lesssim\|f\|_{T^1_q(\nu)}\|g\|_{T^\infty_{q'}(\nu)}.
    \end{split}
    \end{equation}
Now let us prove \eqref{Eq:stopping-time}. Fubini's theorem yields
    $$
    \int_\T\left(\int_{\Gamma^{h(\z)}(\z)}k(z)\,d\nu(z)\right)\,dm(\z)
    =\int_\D |(I(z)\cap H(z)| k(z)\,d\nu(z),
    $$
where $H(z)=\{\z\in\T:\frac{1}{1+h(\z)}\le |z|\}$, so it suffices to show that
    \begin{equation}\label{Eq:stopping-time-set-estimate}
    \frac{|(I(z)\cap H(z)|}{|I(z)|}\ge\frac{1}{C_2}
    \end{equation}
for all $z\in\D$. We will prove this only for $z$ close enough to the boundary $\T$, the proof for other values of $z$ follows from this reasoning with appropriate modifications.
 For $|z|\ge1-\frac1n$, set $z'=(1-n(1-|z|))z/|z|$ and $x=\frac1{|z|}-1$,  where $n$ is a natural number $\ge 2$  chosen such that
 $I(z)\cap I(u)=\emptyset$ if  $u\notin T(z')\bigcup\left(\D\setminus\overline{D(0,|z|)}\right)$.
This together with Fubini's theorem  gives
    \begin{equation}\label{st1}
    \begin{split}
    &\frac1{|I(z)|}\int_{I(z)}\left(\int_{\Gamma^x(\z)}|g(u)|^{q'}\,d\nu(u)\right)\,dm(\z)\\
    &=\frac1{|I(z)|}\int_{\{|z|<|u|<1\}}|I(z)\cap I(u)||g(u)|^{q'}\,d\nu(u)\\
    &\le\frac1{|I(z)|}\int_{T(z')}|I(z)\cap I(u)||g(u)|^{q'}\,d\nu(u)\\
    &\le \frac{C_3}{|I(z')|}\int_{T(z')}|g(u)|^{q'}(1-|u|)\,d\nu(u)
    \le C_3\inf_{v\in I(z)}C^{q'}_{q',\nu}(g)(v),
    \end{split}
    \end{equation}
where the  last inequality is valid because
    $$
    \frac1{|I(z')|}\int_{T(z')}|g(u)|^{q'}(1-|u|)\,d\nu(u)
    \le \sup_{a\in\Gamma(v)}\frac1{|I(a)|}\int_{T(a)}|g(u)|^{q'}(1-|u|)\,d\nu(u)
    $$
for all $v\in I(z)$. Denote $E(z)=\T\setminus H(z)=\{\z\in\T:(1+h(\z))|z|<1\}$. By the definition of $h(\z)$ and \eqref{st1}, and by choosing $C_1$ sufficiently large so that $C_1^{q'}>2C_3$, we deduce
    \begin{equation*}
    \begin{split}
    | I(z)\cap E(z)|
    &\le\int_{I(z)}\frac{A^{q'}_{q',\nu}(g|x)(\z)}{C_1^{q'}C^{q'}_{q',\nu}(g)(\z)}\,dm(\z)\\
    &\le\frac{1}{C_1^{q'}\inf_{v\in I(z)}C^{q'}_{q',\nu}(g)(v)}\int_{I(z)}A^{q'}_{q',\nu}(g|x)(\z)\,dm(\z)\\
    &\le\frac{C_3 |I(z)|}{C_1^{q'}}<\frac12 |I(z)|.
    \end{split}
    \end{equation*}
Therefore,
    \begin{equation*}
    \begin{split}
    &\ \frac{| I(z)\cap E(z)|}{|I(z)|}
    =1- \frac{|I(z)\cap E(z)|}{|I(z)|}\ge\frac12,\quad |z|\ge 1-\frac1n,
    \end{split}
    \end{equation*}
and the inequality \eqref{Eq:stopping-time-set-estimate} follows.
\end{proof}
The  reverse implication of Theorem \ref{Thm:tent-spaces-duality} \cite{Lu93} can be proved by using geometric ideas,
 the boundedness of maximal functions and interpolation theorems on $L^pL^q(\nu,m)$ due
to Benedek and Panzone \cite{BenedekPanzone}.

\par The analogue of the following result on $\mathbb{R}^n\times(0,\infty)$  was proved in \cite[Proposition $1$]{Lu93}. See also \cite[Lemma $4$]{PelRatMathAnn}.
\begin{letterlemma}\label{Lemma:cone-integral-nu}
Let $0<p<\infty$ and let $\nu$ be a positive Borel measure on~$\D$, finite on compact sets. Then there exists $\lambda_0=\lambda_0(p)\ge1$ such that
    \begin{equation}\label{Eq:cone-integral-nu}
    \int_\T\left(\int_\D\left(\frac{1-|z|}{|1-\overline{\z}z|}\right)^\lambda d\nu(z)\right)^p\,dm(\z)\asymp\int_\T\left(\nu(\Gamma(\z))\right)^p\,dm(\z)+\nu(\{0\})
    \end{equation}
for each $\lambda>\lambda_0$.
\end{letterlemma}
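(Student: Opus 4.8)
The plan is to establish the two inequalities in \eqref{Eq:cone-integral-nu} separately, treating the contribution of the point mass at the origin trivially (the term $\left(\frac{1-|z|}{|1-\overline{\z}z|}\right)^\lambda$ equals $1$ at $z=0$, so $\nu(\{0\})$ appears on both sides after an obvious splitting). Thus we may assume $\nu(\{0\})=0$ and prove
$$
\int_\T\left(\int_\D\left(\frac{1-|z|}{|1-\overline{\z}z|}\right)^\lambda d\nu(z)\right)^p\,dm(\z)\asymp\int_\T\left(\nu(\Gamma(\z))\right)^p\,dm(\z).
$$

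For the lower bound ($\gtrsim$), I would use that for $z\in\Gamma(\z)$ one has $|1-\overline{\z}z|\asymp 1-|z|$, so $\left(\frac{1-|z|}{|1-\overline{\z}z|}\right)^\lambda\asymp 1$ on the cone; hence the inner integral over $\D$ dominates a constant multiple of $\nu(\Gamma(\z))$, and integrating the $p$-th power over $\T$ gives the desired inequality with no restriction on $\lambda$. The substance is the upper bound ($\lesssim$). Here the standard device is a dyadic decomposition of $\D$: for $z\in\D$ write $I_z$ for the boundary arc centered at $z/|z|$ of length comparable to $1-|z|$, and decompose the disc according to the "level" of $z$ and the position of $I_z$ relative to $\z$. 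Concretely, for $\z\in\T$ split $\D=\bigcup_{k\ge0}R_k(\z)$ where $R_k(\z)=\{z\in\D:\,2^{-k-1}\le\operatorname{dist}(\z,I_z)+(1-|z|)<2^{-k}\}$ roughly, so that on $R_k(\z)$ we have $|1-\overline{\z}z|\asymp 2^{-k}$. Then
$$
\int_\D\left(\frac{1-|z|}{|1-\overline{\z}z|}\right)^\lambda d\nu(z)\asymp\sum_{k\ge0}2^{\lambda k}\int_{R_k(\z)}(1-|z|)^\lambda\,d\nu(z),
$$
and on $R_k(\z)$ one further splits into the $O(1)$ dyadic subarcs $I$ of length $\asymp 2^{-k}$ whose Carleson box (or tent) contains the relevant part of $R_k(\z)$, observing that $R_k(\z)\cap\{1-|z|\asymp 2^{-j}\}$ for $j\ge k$ is covered by a bounded number of tents $T(I)$ with $|I|\asymp 2^{-j}$ sitting over arcs within distance $O(2^{-k})$ of $\z$. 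The point of introducing $\lambda$ large is precisely that the factor $(1-|z|)^\lambda=2^{-j\lambda}$ decays fast enough in $j\ge k$ to sum the geometric series after one absorbs the growth coming from the number of arcs and from $\nu$-mass estimates of the form $\nu(T(I))\lesssim\inf_{\z\in I}\nu(\Gamma(\z))\cdot(\text{something})$; this forces a threshold $\lambda_0=\lambda_0(p)$, because after raising to the $p$-th power (using subadditivity of $t\mapsto t^{\min(1,p)}$ when $p\le1$, or Minkowski/Hölder when $p\ge1$) one needs $\lambda p$, resp. a suitable combination, to beat the entropy factor $2^{j}$ coming from the number of unit-scale arcs.

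More carefully, after the dyadic reduction the key pointwise/averaged estimate to prove is: for each dyadic arc $I$ with $|I|=2^{-j}$,
$$
\int_{T(I)}(1-|z|)^\lambda\,d\nu(z)\lesssim 2^{-j(\lambda-1)}\,\frac1{|I|}\int_I\nu(\Gamma(\z))\,dm(\z)
$$
is too crude, so instead one keeps the sum intact and, integrating over $\z\in\T$, applies Fubini to land on
$$
\int_\T\Big(\int_\D(\cdots)\,d\nu\Big)^p dm(\z)\lesssim\sum_{I\text{ dyadic}}\Big(\cdots\Big),
$$
where the arcs are organized by generation; the geometric gain $2^{-k\lambda}$ times the number $\asymp 2^{k-j}$ of generation-$j$ subarcs under a generation-$k$ arc, times the $\nu$-mass, is summed first in $j\ge k$ (convergent once $\lambda>1$) and then the outer sum over $k$ is controlled by $\int_\T(\nu(\Gamma(\z)))^p\,dm(\z)$ provided $\lambda$ (in fact $\lambda p$, when $p\ge1$, and a larger threshold when $0<p<1$ because of the failure of the triangle inequality) exceeds an explicit $\lambda_0(p)$. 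I expect the main obstacle to be exactly this bookkeeping in the regime $0<p<1$: one must apply the elementary inequality $\left(\sum a_k\right)^p\le\sum a_k^p$ at the right moment so as not to lose the summability of the series, which typically costs an extra power of the scale and is the reason $\lambda_0$ depends on $p$. Once the scale-by-scale estimate is assembled, summing the geometric series and a final application of Fubini's theorem (to recognize $\sum_{I\subset\text{gen }k}\int_I(\nu(\Gamma(\z)))^p\,dm(\z)\lesssim\int_\T(\nu(\Gamma(\z)))^p\,dm(\z)$, using that each $\z$ lies in boundedly many such $I$ at each generation) completes the proof.
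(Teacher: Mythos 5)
You should first be aware that the paper does not actually prove Lemma~\ref{Lemma:cone-integral-nu}: it is stated as a quoted result, with the argument deferred to Luecking's Proposition~1 in \cite{Lu93} and Lemma~4 of \cite{PelRatMathAnn}, so there is no in-paper proof to measure yours against. That said, your overall strategy is the standard one and matches those references in spirit: the lower bound is immediate from $|1-\overline{\z}z|\asymp 1-|z|$ on $\Gamma(\z)$, and the upper bound is obtained by a dyadic decomposition of $\D$ according to the size of $|1-\overline{\z}z|$ (the references phrase it via dilated cones $\Gamma_{2^k}(\z)$, on whose successive differences the kernel is $\lesssim 2^{-k\lambda}$, rather than via tents, but the two bookkeepings are interchangeable), with $\lambda_0(p)$ arising from summing a geometric series against an entropy factor and with the case $p\le1$ handled by $(\sum a_k)^p\le\sum a_k^p$.

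As written, however, the proposal has a genuine gap exactly where the lemma has content. First, the covering count is wrong: $R_k(\z)\cap\{1-|z|\asymp 2^{-j}\}$ requires $\asymp 2^{j-k}$ tents of generation $j$, not ``a bounded number,'' and the number of generation-$j$ dyadic subarcs of a generation-$k$ arc is $2^{j-k}$, not $2^{k-j}$. Second, and more seriously, the step you dismiss as ``bookkeeping'' is the entire point of the proof: after the decomposition one must control, uniformly in $\nu$, quantities like $\nu(T(I))$ or $\nu(\Gamma_\alpha(\z))$ by the right-hand side, i.e.\ one needs an aperture-change estimate of the form $\int_\T\bigl(\nu(\Gamma_\alpha(\z))\bigr)^p\,dm(\z)\lesssim \alpha^{c(p)}\int_\T\bigl(\nu(\Gamma(\z))\bigr)^p\,dm(\z)$ with explicit polynomial growth in $\alpha$. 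A pointwise bound of the type $\nu(T(I))\lesssim\inf_{\xi\in I}\nu(\Gamma(\xi))\cdot(\cdots)$ is false in general; the correct substitute is an averaged one, proved via the Hardy--Littlewood maximal function for $p>1$ and via a distribution-function or good-$\lambda$ argument for $p\le1$, and this is precisely what forces $\lambda_0$ to depend on $p$. You neither prove nor cite such an estimate, and the one concrete inequality you write down for this purpose you yourself label ``too crude'' and then replace by an unspecified summation. Finally, a minor point: the point mass at the origin contributes $\nu(\{0\})^p$ to the left-hand side, not $\nu(\{0\})$, so the ``obvious splitting'' at the start deserves a comment (this mismatch is arguably a typo in the statement itself, but your sketch does not notice it). To complete the proof you should either carry out the aperture-change lemma in full or reduce explicitly to \cite[Proposition~1]{Lu93} or \cite[Lemma~4]{PelRatMathAnn}.
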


We defined the tent space $T^p_q(\nu)$ by using the lenses $\Gamma(\z)$.
   Different types of non-tangential approach regions could be used and they would induce the same spaces.
In particular,
the proof of Lemma~\ref{Lemma:cone-integral-nu} shows that we may replace $\Gamma(\z)$ by $\Gamma_\alpha(\z)$ for any $\alpha\in(0,\pi)$ in \eqref{Eq:cone-integral-nu}, and consequently  the space $T^p_q(\nu)$ is independent of the aperture of the lens appearing in the definition,
and the quasi-norms obtained for different lenses are equivalent.

Recall that $Z=\{z_k\}_{k=0}^\infty\subset\D$ is called a
separated sequence if it
is separated in the pseudohyperbolic metric, it is an $\e$-net if $\D=\bigcup_{k=0}^\infty \Delta(z_k,\e)$, and finally
 it is a
$\delta$-lattice if it is a $5\delta$-net and separated with constant $\gamma=\delta/5$.
If $\nu=\sum_k \d_{z_k}$,  then we write $T^p_q(\nu)=T^p_q(\{z_k\})$.
The next result \cite[Theorem $2$]{Lu93} (see also \cite[Lemma $6$]{PelRatMathAnn}) plays an essential role in the proof of Theorem~\ref{Theorem:DifferentiationOperatorcompact} (a).
\begin{letterlemma}\label{Lemma:operator-tent-bergman}
Let $0<p<\infty$ and  let $\{z_k\}$ be a separated sequence. Define
    $$
    S_\lambda(f)(z)=\sum_k f(z_k)\left(\frac{1-|z_k|}{1-\overline{z}_kz}\right)^\lambda,\quad z\in\D.
    $$
Then $S_\lambda:T^p_2(\{z_k\})\to H^p$ is bounded for all $\lambda>\lambda_0$, where $\lambda_0=\lambda_0(p)\ge1$ is that of Lemma~\ref{Lemma:cone-integral-nu}.
\end{letterlemma}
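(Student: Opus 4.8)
The plan is to prove boundedness of $S_\lambda$ by duality together with Lemma~\ref{Lemma:cone-integral-nu}, splitting into the cases $1\le p<\infty$ and $0<p<1$. First I would record the pointwise estimate that, since $\{z_k\}$ is separated and $\lambda\ge 1$, for every $z\in\D$ one has $\sum_k \bigl|f(z_k)\bigr|\bigl(\frac{1-|z_k|}{|1-\overline z_k z|}\bigr)^\lambda \lesssim \int_\D |F(w)|\bigl(\frac{1-|w|}{|1-\overline w z|}\bigr)^{\lambda} \,dh(w)$ for a suitable auxiliary function $F$ built from the values $f(z_k)$ (e.g.\ $F = \sum_k |f(z_k)|\,\chi_{\Delta(z_k,\gamma)}$), using that the pseudohyperbolic discs $\Delta(z_k,\gamma)$ are essentially disjoint and that $1-|w|\asymp 1-|z_k|$ and $|1-\overline w z|\asymp |1-\overline z_k z|$ on each such disc. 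Thus $|S_\lambda(f)(z)|$ is dominated by a positive integral operator with kernel $\bigl(\frac{1-|w|}{|1-\overline w z|}\bigr)^\lambda$ applied to $F$, where $F$ satisfies $A_{2,h}(F)(\z)\asymp A_2(\{f(z_k)\})(\z)$, i.e.\ $\|F\|_{T^p_2(h)}\asymp \|f\|_{T^p_2(\{z_k\})}$.

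For $1<p<\infty$ I would argue by duality against $H^{p'}\subset (H^p)^*$. Taking $g$ a polynomial (dense in $H^{p'}$), one computes $\langle S_\lambda f, g\rangle = \sum_k f(z_k)\overline{c_k}$ with $c_k$ built from $g$, and after the reproducing-type manipulations this pairing is bounded, via Fubini and the Cauchy--Schwarz inequality in the $w$-variable inside the cone, by $\int_\T A_{2,h}(F)(\z)\, A_{2,h}(G_g)(\z)\,dm(\z)$ where $G_g$ is the analogous function attached to $g$; then Hölder on $\T$ and Lemma~\ref{Lemma:cone-integral-nu} (which identifies $\|G_g\|_{T^{p'}_2(h)}$ with a cone integral comparable to $\|g\|_{H^{p'}}$, using $\lambda>\lambda_0(p')$) close the argument. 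The case $p=1$ uses instead the duality with $\mathrm{BMOA}$ or, more simply, a direct estimate of $\|S_\lambda f\|_{H^1}$ via the non-tangential maximal function and Lemma~\ref{Lemma:cone-integral-nu} with exponent $1$. The case $0<p\le 1$ should be handled directly: bound the non-tangential maximal function $(S_\lambda f)^*(\z)=\sup_{z\in\Gamma(\z)}|S_\lambda f(z)|$ by the maximal function of the positive kernel operator applied to $F$, dominate the latter pointwise by $\bigl(\mathcal{M}(F^{t})(\z)\bigr)^{1/t}$-type expressions for a small $t<p$ (a standard consequence of $\int_\D \bigl(\frac{1-|w|}{|1-\overline w z|}\bigr)^{\lambda t}\,dh(w)<\infty$ once $\lambda t>1$), and integrate, invoking the Hardy--Littlewood maximal theorem and again Lemma~\ref{Lemma:cone-integral-nu} to return to $\|F\|_{T^p_2(h)}$.

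The key quantitative input throughout is Lemma~\ref{Lemma:cone-integral-nu}, which converts the cone integral $\int_\T\bigl(\int_\D(\frac{1-|w|}{|1-\overline\z w|})^\lambda\,d\nu(w)\bigr)^p dm(\z)$ into $\int_\T \nu(\Gamma(\z))^p dm(\z)$ for $\lambda>\lambda_0(p)$; applied with $d\nu = |F|\,dh$ this is exactly the bridge between the integral operator estimate and the $T^p_2(h)$-quasinorm of $F$, hence of $\{f(z_k)\}$. The main obstacle I anticipate is the $0<p<1$ range: there is no duality, the maximal-function/subordination argument must be carried out carefully so that the auxiliary exponent $t$ satisfies simultaneously $t<p$ and $\lambda t>\lambda_0$-type lower bounds, which is why the statement only claims the conclusion for $\lambda$ large (depending on $p$). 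A secondary technical point is the passage from the discrete sum over $\{z_k\}$ to the continuous integral against $dh$ and back, which is routine given separation but must be done uniformly so the constants do not degenerate; once these are in place, the three cases assemble into the asserted boundedness $S_\lambda: T^p_2(\{z_k\})\to H^p$.
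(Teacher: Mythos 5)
First, note that the paper itself does not prove this lemma: it is a quoted result (Luecking, \cite[Theorem 2]{Lu93}; see also \cite[Lemma 6]{PelRatMathAnn}), so your attempt can only be measured against the argument in those sources, which is also the one implicitly rehearsed where the paper applies Lemma~\ref{Lemma:cone-integral-nu} to the measure $\sum_k|f(z_k)|^2\delta_{z_k}$ with exponent $p/2$.

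Your proposal has a genuine gap in the range $0<p\le 1$. The whole difficulty of the lemma is that $\|S_\lambda f\|_{H^p}$ must be controlled by the $\ell^2$-aggregate $\bigl(\sum_{z_k\in\Gamma(\z)}|f(z_k)|^2\bigr)^{1/2}$, which is \emph{smaller} than the $\ell^1$-aggregate. Your plan for $p\le1$ replaces $S_\lambda f$ by the positive majorant $u(z)=\int_\D |F(w)|\bigl(\tfrac{1-|w|}{|1-\overline w z|}\bigr)^\lambda dh(w)$ and then estimates a non-tangential maximal function of $u$. This cannot work: for $F=\e\chi_{\Gamma(\z_0)\cap\{1/2<|w|<R\}}$ one has $\sup_{z\in\Gamma(\z_0)}u(z)\gtrsim \e\log\tfrac1{1-R}\asymp A_{1,h}(F)(\z_0)$, while $A_{2,h}(F)(\z_0)\asymp\e\bigl(\log\tfrac1{1-R}\bigr)^{1/2}$; so no pointwise maximal-function domination of $\sup_{\Gamma(\z)}u$ by the $T^p_2$ data exists, and the route at best yields $T^p_1(\{z_k\})\to H^p$, a strictly weaker statement since $\|f\|_{T^p_2}\le\|f\|_{T^p_1}$. (Two further defects: $\mathcal M(F^t)$ is undefined as written, since $F$ lives on $\D$ rather than $\T$; and the subordination $\sup_{\Gamma}|f|\lesssim(\mathcal M(|f^*|^t))^{1/t}$ is a property of analytic functions that is destroyed by passing to a positive majorant.) Your duality argument for $1<p<\infty$ is essentially sound — Cauchy--Schwarz inside the cone is precisely what preserves the $\ell^2$ structure — though the identification $\|G_g\|_{T^{p'}_2(h)}\lesssim\|g\|_{H^{p'}}$ is the square-function estimate \eqref{fsest}, not Lemma~\ref{Lemma:cone-integral-nu}.

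The proof in the cited references needs no case distinction and repairs exactly the defect above. By \eqref{fsest},
\begin{equation*}
\|S_\lambda f\|_{H^p}^p\asymp |S_\lambda f(0)|^p+\int_\T\Bigl(\int_{\Gamma(\z)}|(S_\lambda f)'(z)|^2\,dA(z)\Bigr)^{p/2}dm(\z).
\end{equation*}
Apply the Cauchy--Schwarz inequality to the sum defining $(S_\lambda f)'(z)$ with the splitting $\tfrac{(1-|z_k|)^{\lambda}}{|1-\overline z_kz|^{\lambda+1}}=\bigl(\tfrac{(1-|z_k|)^{\lambda+\e}}{|1-\overline z_kz|^{\lambda+1+\e}}\bigr)^{1/2}\bigl(\tfrac{(1-|z_k|)^{\lambda-\e}}{|1-\overline z_kz|^{\lambda+1-\e}}\bigr)^{1/2}$; separation gives $\sum_k\tfrac{(1-|z_k|)^{\lambda-\e}}{|1-\overline z_kz|^{\lambda+1-\e}}\lesssim(1-|z|)^{-1}$, whence $\int_{\Gamma(\z)}|(S_\lambda f)'|^2dA\lesssim\sum_k|f(z_k)|^2\bigl(\tfrac{1-|z_k|}{|1-\overline z_k\z|}\bigr)^{\lambda+\e}$. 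Lemma~\ref{Lemma:cone-integral-nu}, applied with exponent $p/2$ to $\nu=\sum_k|f(z_k)|^2\delta_{z_k}$, then yields $\|S_\lambda f\|_{H^p}^p\lesssim\|f\|^p_{T^p_2(\{z_k\})}$ for all $0<p<\infty$ at once. If you keep your outline, you must replace the $0<p\le1$ step by this square-function computation (or by an atomic/$T^{p}_2$-specific argument); as written, that case is not proved.
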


\par We shall also use the following inequality.
Here and on the following  $\triangle$ denotes the Laplacian.
\begin{lemma} If $q\ge 2$ and $0<r<1$ there is a constant $C(q,r)>0$ such that
\begin{equation}\label{Eq:Laplacian-first-derivative}
    \begin{split}
    |f'(z)|^q(1-|z|^2)^q\le C(q,r) \int_{\Delta(z,r)}\triangle|f|^q(\z)\,dA(\z),\quad z\in\D.
    \end{split}
    \end{equation}
\end{lemma}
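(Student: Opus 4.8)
The plan is to pass to the origin by a Möbius change of variables and then play off against each other two sub-mean value inequalities: the first for $|f|^{q-2}|f'|^2$, and the second — through a Riesz representation — for $|f|^q$ itself.

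First I would record the elementary facts. For $f$ analytic one has $\triangle|f|^q=q^2|f|^{q-2}|f'|^2$ off the zeros of $f$, and since $q\ge2$, $|f|^q\in C^2(\D)$, so this holds as an identity of continuous functions on all of $\D$. Two consequences are needed: $|f|^q$ is subharmonic, and $|f|^{q-2}|f'|^2$ is subharmonic — off the zeros it is locally $|h|^2$ with $h$ analytic, and near a zero of order $k\ge1$ it is comparable to $|\cdot|^{kq-2}$ with $kq-2\ge0$, hence bounded, so the isolated singularities are removable. (This is where the hypothesis $q\ge2$ enters.) Next, fix $z\in\D$, let $\varphi_z(w)=(z-w)/(1-\overline z w)$ be the involutive automorphism with $\varphi_z(0)=z$, and put $g=f\circ\varphi_z$, analytic on $\D$ and so on $\overline{D(0,r)}$. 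Since $\varphi_z$ maps $D(0,r)$ bijectively onto $\Delta(z,r)$, $|g|^q=|f|^q\circ\varphi_z$, $|\varphi_z'(0)|=1-|z|^2$, and $\triangle(u\circ\varphi_z)=(|\varphi_z'|^2\triangle u)\circ\varphi_z$, the change of variables $\xi=\varphi_z(w)$ turns the assertion into
\[
b^q\le C(q,r)\,I,\qquad b:=|g'(0)|,\quad I:=\int_{D(0,r)}\triangle|g|^q\,dA=q^2\int_{D(0,r)}|g|^{q-2}|g'|^2\,dA,
\]
for arbitrary $g$ analytic on $\overline{D(0,r)}$; set also $a:=|g(0)|$, and assume $b>0$.

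From the sub-mean value property of $|g|^{q-2}|g'|^2$ over $D(0,r/2)$,
\[
a^{q-2}b^2\le\frac{4}{\pi r^2}\int_{D(0,r/2)}|g|^{q-2}|g'|^2\,dA\le\frac{4}{\pi r^2q^2}\,I .
\]
Separately, $|g|^q$ subharmonic gives the Poisson bound $\sup_{D(0,r/2)}|g|^q\le4\,m(r)$, $m(\rho)=\frac1{2\pi}\int_0^{2\pi}|g(\rho e^{i\theta})|^q\,d\theta$, while its Riesz representation on $D(0,r)$ reads $m(r)=a^q+\frac1{2\pi}\int_{D(0,r)}\log\frac{r}{|\xi|}\,\triangle|g|^q(\xi)\,dA(\xi)$. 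Splitting the last integral at $|\xi|=r/2$, estimating $\triangle|g|^q(\xi)=q^2(|g|^{q-2}|g'|^2)(\xi)\le\frac{4I}{\pi r^2}$ for $|\xi|<r/2$ (sub-mean value, since $D(\xi,r/2)\subset D(0,r)$) and $\log\frac r{|\xi|}\le\log2$ for $r/2\le|\xi|<r$, one gets $\int_{D(0,r)}\log\frac r{|\xi|}\triangle|g|^q\,dA\le CI$ with $C$ absolute; combined with Cauchy's estimate $b\le\frac2r\sup_{D(0,r/2)}|g|$ this yields
\[
b^q\le\Big(\frac2r\Big)^q\sup_{D(0,r/2)}|g|^q\le C_3(q,r)\,(a^q+I).
\]
Now choose $c_0=c_0(q,r)$ with $C_3c_0^q\le\frac12$. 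If $a\le c_0b$, the last display gives $b^q\le\frac12b^q+C_3I$, i.e. $b^q\le2C_3I$. If $a>c_0b$, then $b^{q-2}\le c_0^{-(q-2)}a^{q-2}$ (here $q\ge2$ again), so $b^q=b^{q-2}b^2\le c_0^{-(q-2)}a^{q-2}b^2\le c_0^{-(q-2)}\frac{4}{\pi r^2q^2}I$ by the first estimate. Either way $b^q\le C(q,r)I$, and undoing the change of variables gives the lemma.

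The genuine difficulty I expect is the sup-estimate $\sup_{D(0,r/2)}|g|^q\le C_3(q,r)(|g(0)|^q+I)$: it is false without the term $|g(0)|^q$ (take $g=1+\varepsilon w$ and let $\varepsilon\to0$), so the Riesz representation cannot be bypassed, and the key point is that the $\log$-weighted Laplacian mass near the centre is dominated by the plain mass $I$ precisely because $\triangle|g|^q$ is again subharmonic. After that, the dichotomy is only bookkeeping, but it must be organised so that both regimes — $|g(0)|$ large, respectively small, relative to $|g'(0)|$ — are absorbed, since the sub-mean value bound for $|g|^{q-2}|g'|^2$ alone settles only the first.
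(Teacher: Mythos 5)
Your proof is correct, but it reaches the inequality by a genuinely different mechanism than the paper. The paper also reduces to the origin (first proving the estimate for $f'(0)$, then dilating and composing with $\varphi_z$), and it also relies on the same two ingredients you isolate as essential -- the Hardy--Stein--Spencer/Riesz representation and the subharmonicity of $\triangle|f|^q$ for $q\ge2$, the latter used exactly as you use it, to dominate the Green potential of $\triangle|f|^q$ near the centre by its plain mass. The difference is in how the unwanted term $|f(0)|^q$ is removed. The paper does it in one line with a coefficient trick: $|f'(0)|^q\le\bigl(\|f\|_{H^2}^2-|f(0)|^2\bigr)^{q/2}\le\|f\|_{H^2}^q-|f(0)|^q\le\|f\|_{H^q}^q-|f(0)|^q$, and the right-hand side \emph{is} the Green potential by Hardy--Stein--Spencer, so no case analysis is needed. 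You instead prove the weaker bound $|g'(0)|^q\lesssim|g(0)|^q+I$ and then absorb $|g(0)|^q$ via the dichotomy, using the separate sub-mean value inequality $|g(0)|^{q-2}|g'(0)|^2\lesssim I$ for the subharmonic function $|g|^{q-2}|g'|^2$ when $|g(0)|$ is large relative to $|g'(0)|$. The paper's route is shorter and avoids the bookkeeping; yours is arguably more robust, since it never appeals to the orthogonality of Taylor coefficients in $H^2$ or to the inclusion $\|f\|_{H^2}\le\|f\|_{H^q}$, only to sub-mean value properties, and so would transfer to settings where no such coefficient identity is available. Both arguments use $q\ge2$ in the same two places (subharmonicity of $|f|^{q-2}|f'|^2$, and a convexity/monotonicity step), and both are complete.
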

\begin{proof}
Let $r\in(0,1)$ be fixed. The  classical Hardy-Stein-Spencer identity
$\|f\|_{H^q}^q=|f(0)|^q+
\frac{1}{2}\int_{\D}\triangle|f(z)|^q\,\log\frac{1}{|z|}\,dA(z)\,
$ and
the fact that the Laplacian $\triangle|f|^q$ is subharmonic when $q\ge2$ give
    \begin{equation*}
    \begin{split}
    |f'(0)|^q&\le\left(\|f\|_{H^2}^2-|f(0)|^2\right)^\frac
    q2\le\|f\|_{H^2}^q-|f(0)|^q\le\|f\|_{H^q}^q-|f(0)|^q\\
    &=\frac{1}{2}\int_{\D}\triangle|f|^q(z)\log\frac{1}{|z|}\,dA(z)\le C(q)\int_{\D}\triangle|f|^q(z)(1-|z|)\,dA(z).
    \end{split}
    \end{equation*}
An application of this inequality to the function $f(rz)$ gives
    $$
    |f'(0)|^q\le C(q,r)\int_{\De(0,r)}\triangle|f|^q(z)\left(1-\frac{|z|}{r}\right)\,dA(z).
    $$
Replace now $f$ by $f\circ\varphi_z$ to obtain
\begin{equation*}
    \begin{split}
    |f'(z)|^q(1-|z|^2)^q&\le C(q,r) \int_{\Delta(z,r)}\triangle|f|^q(\z)\left(1-\frac{|\varphi_z(\z)|}{r}\right)\,dA(\z)\\
    &\le C(q,r)\int_{\Delta(z,r)}\triangle|f|^q(\z)\,dA(\z),\quad z\in\D.
    \end{split}
    \end{equation*}
\end{proof}

\section{Proof of main results}
We begin with proving
 Theorem~\ref{Theorem:DifferentiationOperatorcompact}(a).
\begin{theorem}
Let $0<q\le p<\infty$, $n\in\N$ and let  $\mu$ be a positive Borel measure on $\D$. Then $D^{(n)}:H^p \to L^q(\mu)$ is compact if and only if, for any fixed $r\in(0,1)$, the function
    $$
    \Phi_\mu(z)=\frac{\mu(\Delta(z,r))}{(1-|z|)^{1+qn}}
    $$
    satisfies that
    \begin{enumerate}
    \item[\rm(i)] $\Phi_\mu \in T^\frac{p}{p-q}_{\frac{2}{2-q}}\left(h\right)$, if $q<\min\{2,p\}$;
    \item[\rm(ii)] $\lim_{|a|\to 1^-} \frac{1}{|I(a)|}\int_{T(a)}|\Phi_\mu(z)|^{\frac{2}{2-p}}\,\frac{dA(z)}{ 1-|z|}=0$, if $q=p<2$;
    \item[\rm(iii)]  $\lim_{R\to 1^-}\int_\T\left(\sup_{z\in{\Gamma(\z)\setminus\overline{D(0,R)}}}\Phi_\mu(z)\right)^{\frac{p}{p-q}}\,dm(\z)=0$, if $2\le q<p$.
    \end{enumerate}
\end{theorem}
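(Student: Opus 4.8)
The plan is to reduce the compactness of $D^{(n)}:H^p\to L^q(\mu)$ to a statement about a suitable embedding of tent spaces, and then to use the duality (Proposition~\ref{Prop:tent-spaces-duality} and Theorem~\ref{Thm:tent-spaces-duality}) together with the atomic-type decomposition implicit in Lemma~\ref{Lemma:operator-tent-bergman}. The starting point is the Fefferman--Stein square-function characterization \eqref{fsest}: since $f\mapsto (f^{(n)}(z)(1-|z|)^{n-1})$ identifies $H^p$ isometrically (up to the finitely many initial Taylor coefficients, which are harmless for compactness) with a closed subspace of $T^p_2(h)$, the operator $D^{(n)}$ is compact from $H^p$ into $L^q(\mu)$ exactly when the ``multiplication-restriction'' map sending $g\in T^p_2(h)$ to $g$ viewed in $L^q$ of the measure $d\mu_n(z)=(1-|z|)^{q(1-n)}\,d\mu(z)$ is compact on that subspace. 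So the whole problem becomes: characterize compactness of the embedding $T^p_2(h)\hookrightarrow L^q(\mu_n)$, and the three cases in the statement are precisely the three regimes $q<\min\{2,p\}$, $q=p<2$, $2\le q<p$ of the underlying $T^p_2\to T^q_q$ / $L^q(\mu_n)$ embedding problem. The function $\Phi_\mu(z)=\mu(\Delta(z,r))/(1-|z|)^{1+qn}$ is (up to constants depending on $r$) the natural averaged density of $\mu_n$ against the hyperbolic measure, so the conditions (i)--(iii) are the compact analogues of the known boundedness conditions.

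Concretely, I would proceed as follows. First, record the ``little-oh'' principle: $D^{(n)}$ is compact iff $\|f_k^{(n)}\|_{L^q(\mu)}\to 0$ for every sequence $f_k$ bounded in $H^p$ with $f_k\to 0$ uniformly on compact subsets of $\D$; equivalently, the embedding is compact iff its ``tail'' operators (composition with multiplication by $\chi_{\{|z|>R\}}$) tend to zero in norm as $R\to1^-$. This reduces each case to showing that the relevant tail norm is controlled by the corresponding quantity in (i)--(iii) restricted to $|z|>R$, and conversely. For the \emph{sufficiency} direction in case (i) ($q<\min\{2,p\}$), discretize $\mu$ along a $\delta$-lattice $\{z_k\}$, so that $\mu(\Delta(z_k,r))\asymp$ the atomic weights; write $\|f^{(n)}\|_{L^q(\mu)}^q\lesssim\sum_k |f^{(n)}(z_k)|^q(1-|z_k|)^{qn}\,\Phi_\mu(z_k)(1-|z_k|)$, recognize the right side as $\langle$ something in $T^{q/?}$, $\Phi_\mu\rangle$, and apply H\"older in the tent-space pairing with exponents $(p/q,\ (p/q)')$ together with the identification $(p/q)'=p/(p-q)$ and the second index $2/q$, $(2/q)'=2/(2-q)$; the hypothesis $\Phi_\mu\in T^{p/(p-q)}_{2/(2-q)}(h)$ then gives the bound, and replacing $\Phi_\mu$ by $\Phi_\mu\chi_{\{|z|>R\}}$ (whose $T^{p/(p-q)}_{2/(2-q)}(h)$-norm $\to0$, by dominated convergence in the tent quasi-norm) yields compactness of the tail, hence compactness. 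For \emph{necessity} in case (i), test $D^{(n)}$ against the universal family from Lemma~\ref{Lemma:operator-tent-bergman}: given $a\in T^{p/(p-q)}_{2/(2-q)}(h)^* = T^{(p/(p-q))'}_{(2/(2-q))'}(h)$-normalized data supported far from $0$, build $f=S_\lambda(c_k)$ with suitable coefficients $c_k$ so that $f^{(n)}(z_k)$ is comparable to the prescribed values; compactness forces the pairing $\sum_k \Phi_\mu(z_k)\cdot(\text{data})\cdot(1-|z_k|)\to0$ uniformly, which is exactly the vanishing of the tail of the $T^{p/(p-q)}_{2/(2-q)}(h)$-quasi-norm of $\Phi_\mu$, i.e.\ (i). Case (iii) ($2\le q<p$) is analogous but simpler: here the inequality \eqref{Eq:Laplacian-first-derivative} (iterated to control $|f^{(n)}|^q(1-|z|)^{qn}$ by $\int_{\Delta(z,r)}\triangle|f^{(n-1)}|^q$, and ultimately by an area integral of $\triangle|f|^q$-type quantities) lets one bound $\|f^{(n)}\|_{L^q(\mu)}^q$ by $\int_\T A^q(\z)\,\bigl(\sup_{\Gamma(\z)}\Phi_\mu\bigr)\,dm(\z)$ with $A$ the square function, and H\"older with exponent $p/q$ closes the bound; the tail in $R$ is governed by $\sup_{\Gamma(\z)\setminus\overline{D(0,R)}}\Phi_\mu$, giving (iii), with necessity again by the atomic test functions. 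Case (ii) ($q=p<2$) is the endpoint and must be handled directly: here $p/(p-q)=\infty$, the tent space degenerates to $T^\infty_{2/(2-p)}(h)$, and the condition (ii) is precisely the ``$C_{q',\nu}$-type'' little-oh condition; the stopping-time argument used in the proof of Proposition~\ref{Prop:tent-spaces-duality} (the construction of $h(\z)$ and the estimate \eqref{Eq:stopping-time}) is the tool that converts the $T^p_p$-embedding into the local averaged condition over tents $T(a)$ with $|a|\to1$.

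The main obstacle I expect is the \emph{necessity} half in the genuinely off-diagonal case (i), i.e.\ producing test functions in $H^p$ whose $n$th derivatives realize an arbitrary element of the predual tent space while keeping the $H^p$-norm under control and the functions converging to zero locally uniformly: one has to combine Lemma~\ref{Lemma:operator-tent-bergman} (which gives $S_\lambda:T^p_2(\{z_k\})\to H^p$) with the fact that $f^{(n)}$ of such an $S_\lambda$-sum is again, up to the weight $(1-|z|)^{-n}$, a sum of the same shape — this requires differentiating the kernels $\bigl((1-|z_k|)/(1-\overline{z_k}z)\bigr)^\lambda$ $n$ times and checking the resulting series still maps the tent space boundedly, which is where Lemma~\ref{Lemma:cone-integral-nu} with a larger $\lambda$ is invoked. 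The secondary technical point is the interplay of the two exponents in the tent space $T^{p/(p-q)}_{2/(2-q)}(h)$: one must be careful that the duality pairing used for necessity is the one from Theorem~\ref{Thm:tent-spaces-duality} and that $p+q\neq 2$-type nondegeneracy (here $p/(p-q)+2/(2-q)\neq2$, automatic since $q<\min\{2,p\}$) holds so that Theorem~\ref{Thm:tent-spaces-duality} applies. Once these are in place, the ``$\chi_{\{|z|>R\}}$-tail tends to zero'' formulation makes the passage boundedness $\Rightarrow$ compactness of the tail $\Rightarrow$ compactness entirely routine in each of the three regimes.
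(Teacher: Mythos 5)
Your overall architecture matches the paper's: sufficiency via the subharmonicity estimate, the square-function characterization \eqref{fsest}, and H\"older in the tent-space pairing against $\Phi_\mu$ (with the tail $\Phi_\mu\chi_{\{|z|>R\}}$ handled by dominated convergence and the local part by uniform convergence on compacta), and necessity via the lattice operators $S_\lambda$ of Lemma~\ref{Lemma:operator-tent-bergman}. But there is a genuine gap in your necessity argument. You propose to ``build $f=S_\lambda(c_k)$ with suitable coefficients so that $f^{(n)}(z_k)$ is comparable to the prescribed values.'' This is not achievable for a single function: the sum $\sum_k c_k(1-|z_k|)^\lambda(1-\overline{z}_kz)^{-\lambda-n}$ can exhibit cancellation at the points $z_j$, so the $L^q(\mu)$-norm of one such function does not control the discrete quantity $\sum_j |c_j|^q(1-|z_j|)^{-qn}\mu(\Delta(z_j,r))$. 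The paper resolves this in the standard Luecking way: replace $c_k$ by $c_kr_k(t)$ with Rademacher functions, integrate in $t$, and apply Khinchine's inequality to pass from the $L^q(\mu)$-bound on the randomized sums to a bound on $\int_\D\bigl(\sum_k|c_k|^2(1-|z_k|)^{2\lambda}|1-\overline{z}_kz|^{-2\lambda-2n}\bigr)^{q/2}\,d\mu(z)$, from which the testing condition \eqref{pippeli} follows by restricting the integral to the discs $\Delta(z_j,r)$. Without this randomization step your necessity proof does not close.

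Two smaller points. First, in case (i) the stated condition is not a little-oh condition --- it is exactly Luecking's boundedness criterion --- so necessity there is immediate from ``compact $\Rightarrow$ bounded'' together with \cite[Theorem 1(i)]{Lu90}; your plan to establish a vanishing-tail statement in that case is unnecessary (though not incorrect). Second, in case (iii) with $q>2$ the inner exponent $2/q$ is less than $1$, so Theorem~\ref{Thm:tent-spaces-duality} does not apply to convert the testing condition into $\sup_{z\in\Gamma(\z)}\Phi_\mu(z)\in L^{p/(p-q)}(\T,m)$; the paper invokes \cite[Proposition 3]{Lu90} instead. Finally, the normalization in your tent-space identification should be $f\mapsto f^{(n)}(z)(1-|z|)^{n}$ (not $(1-|z|)^{n-1}$), since $dh=dA/(1-|z|^2)^2$ already absorbs two powers of $(1-|z|)$.
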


\begin{proof}
Recall the known estimate \cite[Lemma~2.1]{Luecking1985}
    \begin{equation}\label{Eq:suharmonic-n-derivatives}
    |f^{(n)}(z)|^s\lesssim\frac{1}{(1-|z|)^{2+ns}}\int_{\Delta(z,r)}|f(\z)|^s\,dA(\z),\quad z\in\D,\,s>0.
    \end{equation}
Then, the above inequality  and Fubini's theorem give
    \begin{equation}\label{eq:sufdual}
    \begin{split}
    \|f^{(n)}\|_{L^q(\mu)}^q
    &\lesssim\int_\D\frac{1}{(1-|z|)^{2+(n-1)q}}\int_{\Delta(z,r)}|f'(w)|^q\,dA(w)\,d\mu(z)\\
    &\asymp\int_\D |f'(w)|^q\frac{\mu(\Delta(w,r))}{(1-|w|)^{3+(n-1)q}}|I(w)|\,dA(w)
    \\ &= \int_{\D} \left[|f'(w)|(1-|w|)\right]^q \Phi_\mu(w) |I(w)|\,dh(w)
    \end{split}
    \end{equation}
    Let $\{f_k\}_{k=1}^\infty$ such that  $\sup_{k}||f_k||_{H^p}<\infty$. Then, there is subsequence $\{f_{n_{k}}\}_{k=1}^\infty$ which converges uniformly on
compact subsets of $\D$ to an analytic function  $f$. Let denote $g_k=f_{n_k}-f$, $G_k(w)=|g_k'(w)|^q(1-|w|)^q$  and  $dh_R=dh\chi_{\{R<|z|<1\}}$, $0\le R<1$.
Now, we shall show  that conditions (i)-(iii) are sufficient.
\par{\textbf{(i).}}
Fix $\ep>0$.
Since $\Phi_\mu \in T^\frac{p}{p-q}_{\frac{2}{2-q}}\left(h\right)$, by the dominated convergence theorem there is $R_0$ such that
\begin{equation*}\label{c1}
\sup_{R\ge R_0}||\Phi_\mu||_{T^\frac{p}{p-q}_{\frac{2}{2-q}}\left(h_R\right)}<\ep^q.
\end{equation*}
Next, choose $k_0$ with $|g_k(z)|<\ep$ for any $k\ge k_0$ and $|z|\le R_0$.
Then, bearing in mind \eqref{eq:sufdual} and \eqref{p1} and the inequality $\|G_k\|_{T^{\frac{p}{q}}_{\frac{2}{q}}(h)}\lesssim\|g_k\|^q_{H^p}$
(see \eqref{fsest})
\begin{equation*}
    \begin{split}
    \|g_k^{(n)}\|_{L^q(\mu)}^q
    & \lesssim
    \ep^q\int_{|w|\le R_0} (1-|w|)^q \Phi_\mu(w) |I(w)|\,dh(w)
   \\ & + \int_{\D} G_k(w) \Phi_\mu(w) |I(w)|\,dh_{R_0}(w)
    \\ & = \ep^q\langle (1-|w|)^q ,\Phi_\mu\rangle_{T^2_2(h)}+\langle G_k ,\Phi_\mu\rangle_{T^2_2(h_{R_0})}
   \\ & \le
    \ep^q\|(1-|w)^q\|_{T^{\frac{p}{q}}_{\frac{2}{q}}(h)}
    \|\Phi_\mu\|_{T^{\left(\frac{p}{q}\right)'}_{\left(\frac{2}{q}\right)'}(h)}+
    \|G_k\|_{T^{\frac{p}{q}}_{\frac{2}{q}}(h)}
     \|\Phi_\mu\|_{T^{\left(\frac{p}{q}\right)'}_{\left(\frac{2}{q}\right)'}(h_{R_0})}
     \\ & \lesssim \ep^q\left(\|\Phi_\mu\|_{T^{\left(\frac{p}{q}\right)'}_{\left(\frac{2}{q}\right)'}(h)}+||g_k||^q_{H^p}\right)
      \lesssim \ep^q,
    \end{split}
    \end{equation*}
    So $D^{(n)}:H^p \to L^q(\mu)$ is compact.
This together with
\cite[Theorem $1$(i)]{Lu90} proves~(i).
\par{\textbf{(ii).}}    An standard argument
(see \cite[Theorem $3.4$]{BlJa05} for details)
gives that
 $\lim_{|a|\to 1^-} \frac{1}{|I(a)|}\int_{T(a)}|\Phi_\mu(z)|^{\left(\frac{2}{p}\right)'} (1-|z|)\,dh(z)=0$
if and only if
\begin{equation*}\begin{split}
&\lim_{R\to 1^-} \sup_{a\in\D}\frac{1}{|I(a)|}\int_{T(a)}|\Phi_\mu(z)|^{\left(\frac{2}{p}\right)'} (1-|z|)\,dh_R(z)
 =\lim_{R\to 1^-} \|\Phi_\mu\|_{T^\infty_{\left(\frac{2}{p}\right)'}(h_{R})} =0.
\end{split}\end{equation*}
So fixed $\ep>0$, there is $R_0$ such that

\begin{equation*}\begin{split}\label{R0}
&\sup_{a\in\D, R\ge R_0}\frac{1}{|I(a)|}\int_{T(a)}|\Phi_\mu(z)|^{\left(\frac{2}{p}\right)'} (1-|z|)\,dh_R(z)
 = \sup_{ R\ge R_0}\|\Phi_\mu\|_{T^\infty_{\left(\frac{2}{p}\right)'}(h_{R})}
<\ep^p.
\end{split}\end{equation*}
Let $k_0$ be such that  $\sup_{k\ge k_0, |z|\le R_0}|g_k(z)|<\ep$.
\, Then, by \eqref{eq:sufdual}, \eqref{Eq:CMS-estimate} and \eqref{fsest}
\begin{equation*}
    \begin{split}
    \|g_k^{(n)}\|_{L^p(\mu)}^p &
      \lesssim \ep^p \langle (1-|w|)^p ,\Phi_\mu\rangle_{T^2_2(h)}+\langle G_k ,\Phi_\mu\rangle_{T^2_2(h_{R_0})}
  \\ &  \lesssim \e^p \|(1-|w)^p\|_{T^1_{\frac{2}{p}}(h)}
    \|\Phi_\mu\|_{T^\infty_{\left(\frac{2}{p}\right)'}(h)}+\|G_k\|_{T^1_{\frac{2}{p}}}(h)
    \|\Phi_\mu\|_{T^\infty_{\left(\frac{2}{p}\right)'}(h_{R_0})}
    \\ &  \lesssim \e^p( \|\Phi_\mu\|_{T^\infty_{\left(\frac{2}{p}\right)'}(h)}+||g_k||^p_{H^p})
     \lesssim \e^p,
    \end{split}
    \end{equation*}
    which implies that $D^{(n)}: H^p\to L^p(\mu)$ is compact.
\par{\textbf{(iii).}}
Let us observe that \eqref{Eq:Laplacian-first-derivative} and Fubini's theorem give
   \begin{equation*}
    \begin{split}
    \||G_k\|^{\frac{q}{p}}_{T^{\frac{p}{q}}_{1}(h)}&=\int_\T\left(\int_{\Gamma(\z)}|g'_k(w)|^q(1-|w|)^q\,dh(w)\right)^{\frac{p}{q}}\,dm(\z)\\
    &\lesssim\int_\T\left(\int_{\Gamma(\z)}\int_{\Delta(w,r)}\triangle|g_k|^q(z)\,dA(z)\,dh(w)\right)^{\frac{p}{q}}\,dm(\z)\\
    &\le \int_\T\left(\int_{\Gamma'(\z)}\triangle|g_k|^q(z)\,dA(z)\right)^{\frac{p}{q}} \,dm(\z)
    \end{split}
    \end{equation*}
where $\Gamma'(\z)=\{z:\Gamma(\z)\cap\Delta(z,r)\ne\emptyset\}$. Using Lemma~\ref{Lemma:cone-integral-nu} and a result by Calder\'on~\cite[Theorem~1.3]{Pavlovic2013}, we get
$G_k\in T^{\frac{p}{q}}_{1}(h)$ with
$\|G_k\|_{T^{\frac{p}{q}}_{1}(h)}\lesssim\|g_k\|^q_{H^p}$.
From now on, the proof is analogous to both previous cases, so it will be omitted.

\medskip\par Reciprocally, assume that   $D^{(n)}: H^p\to L^q(\mu)$ is compact.
Let $\{z_k\}$ be a $\d$-lattice such that $z_k\ne0$ for all $k$ and let $\mathcal{C}T^p_2(\{z_k\})=\{f\in T^p_2(\{z_k\}):
||f||_{T^p_2(\{z_k\})}=1\}$. For each  $R\in[ 0,1)$ and $\lambda> \lambda_0$ ($\lambda_0$ is that of Lemma~\ref{Lemma:operator-tent-bergman})
consider the operator
    \begin{equation*}
    S_{\lambda,\,R}(f)(z)=\sum_{|z_k|\ge R } f(z_k)\left(\frac{1-|z_k|}{1-\overline{z}_kz}\right)^\lambda,\quad z\in\D.
    \end{equation*}
    Let us observe that $S_{\lambda,\,0}(f)=S_{\lambda}(f)$. By
    Lemma~\ref{Lemma:operator-tent-bergman}, there exists $C>0$ such that
    $$||S_{\lambda,\,R}(f)||_{H^p}\le C||f||_{T^p_2(\{z_k\})},\quad \text{for each $R\in[ 0,1)$ }. $$
    So by the assumption the closure of the set
$ \left\{D^{(n)}\circ S_{\lambda,\,R}\left(\mathcal{C}T^p_2(\{z_k\})\right)\right\}_{R\in[ 0,1)}$ is compact in $L^q(\mu)$.
So, fixed $\ep>0$, standard arguments assert that there is $\rho$ such
that
\begin{equation}\label{dn1}
\int_{\rho<|z|<1}|D^{(n)}\circ S_{\lambda,\,R}(f)(z)|^q\,d\mu(z)< \ep^q\quad\text{for any $R\in[0,1)$ and $f\in\mathcal{C}T^p_2(\{z_k\})$}.
\end{equation}

 Since $\{z_k\}$ is separated and $\lambda>1$, there is $R_0$ such that for any $R\ge R_0$
$ \sum_{|z_k|\ge R } (1-|z_k|)^{\lambda}<\ep^2$. Joining this with
 Lemma \ref{Lemma:cone-integral-nu}, we get
\begin{equation*}\begin{split}
|D^{(n)}\circ S_{\lambda,\,R}(f)(z)| & \le C (\rho,n)\sum_{|z_k|\ge R } |f(z_k)|(1-|z_k|)^{\lambda}
\\ & \le C_\rho \left( \sum_{|z_k|\ge R } |f(z_k)|^2(1-|z_k|)^{\lambda}\right)^{\frac12}\ep
\\ & \le C (\rho,n) \ep\inf_{\z\in\T}\left( \sum_k |f(z_k)|^2\left|\frac{1-|z_k|}{1-\overline{z}_k\z}\right|^\lambda\right)^{\frac12}
\\ & \le C (\rho,n)\ep\int_\T\left(\sum_k|f(z_k)|^2\left(\frac{1-|z_k|}{|1-\overline{z}_k\z|}\right)^\lambda\right)^\frac{p}{2}\,dm(\z)
\\ & \le C (\rho,n) \ep\|f\|_{T^p_2(\{z_k\})}^p,\quad\text{for any  $|z|\le \rho$ and $R\ge R_0$,}
\end{split}\end{equation*}
which together with \eqref{dn1} gives that
\begin{equation}\label{dn2}
||D^{(n)}\circ S_{\lambda,\,R}(f)||_{L^q(\mu)}\lesssim \ep\|f\|_{T^p_2(\{z_k\})},\quad\text{for all $R\ge R_0$ and  $f\in T^p_2(\{z_k\})$}.
\end{equation}
That is
    \begin{equation*}
    \int_\D\left|\sum_{|z_k|\ge R}f(z_k)\frac{(1-|z_k|)^{\lambda}}{(1-\overline{z}_kz)^{\lambda+n}}\right|^q\,d\mu(z)
    \lesssim \ep^q \|f\|^p_{T^q_2(\{z_k\})},
    \end{equation*}
for all $R\ge R_0$ and  $f\in T^p_2(\{z_k\})$.
Replace now $f(z_k)$ by $f(z_k)r_k(t)$, where $r_k$ denotes the $k$th Rademacher function, and integrate with respect to $t$ to obtain
    $$
    \int_0^1\int_\D\left|\sum_{|z_k|\ge R}f(z_k)\frac{(1-|z_k|)^{\lambda}}{(1-\overline{z}_kz)^{\lambda+n}}r_k(t)\right|^qd\mu(z)\,dt\lesssim
   \ep^q \|f\|^q_{T^p_2(\{z_k\})},
    $$
from which Fubini's theorem and an application of Khinchine's inequality yield
    \begin{equation*}
    I=\int_\D\left(\sum_{|z_k|\ge R}|f(z_k)|^2\frac{(1-|z_k|)^{2\lambda}}{|1-\overline{z}_kz|^{2\lambda+2n}}\right)^\frac{q}{2}\,d\mu(z)\lesssim\ep^q \|f\|^q_{T^p_2(\{z_k\})}.
    \end{equation*}
Now, for any fixed $r\in(0,1)$,
    \begin{equation*}
    \begin{split}
    I&\gtrsim\sum_{|z_j|\ge R}\int_{\Delta(z_j,r)}\left(\sum_{|z_k|\ge R}
    |f(z_k)|^2\frac{(1-|z_k|)^{2\lambda}}{|1-\overline{z}_kz|^{2\lambda+2n}}\right)^\frac{q}{2}\,d\mu(z)\\
    &\gtrsim\sum_{|z_j|\ge R}\left(|f(z_j)|^2\frac{(1-|z_j|)^{2\lambda}}{(1-|z_j|)^{2\lambda+2n}}\right)^\frac{q}{2}\mu(\Delta(z_j,r))\\
    &=\sum_{|z_j|\ge R}\frac{|f(z_j)|^q}{(1-|z_j|)^{qn}}\mu(\Delta(z_j,r)),
    \end{split}
    \end{equation*}
    and hence
    \begin{equation}\label{pippeli}
    \begin{split}
    &\sum_{|z_j|\ge R}\frac{|f(z_j)|^q}{(1-|z_j|)^{qn}}\mu(\Delta(z_j,r))\\
    &=\sum_{|z_j|\ge R}|f(z_j)|^q\left(\frac{\mu(\Delta(z_j,r))}{(1-|z_j|)^{1+qn}}\right)(1-|z_j|)\lesssim
    \ep^q \|f\|_{T^p_2(\{z_k\})}^q\\
    &= \ep^q\left(\int_\T\left(\sum_{z_k\in\Gamma(\z)}|f(z_k)|^2\right)^\frac{p}{2}\,dm(\z)\right)^\frac{q}{p}\\
    &= \ep^q \left(\int_\T \left(\left(\sum_{z_k\in\Gamma(\z)}\left(|f(z_k)|^q\right)^\frac{2}{q}\right)^\frac{q}{2}\right)^\frac{p}{q}\,dm(\z) \right)^\frac{q}{p}.
    \end{split}
    \end{equation}
(ii)\, If $q=p<2$, then $s=\frac{p}{q}=1$ and   $v=2/p>1$, so by Theorem~\ref{Thm:tent-spaces-duality}
$\left(T^1_v(\{z_k\})\right)^\star\simeq T^\infty_{v'}(\{z_k\})$  with equivalence of norms.  Therefore \eqref{pippeli} yields
    $$
    \sup_{a\in\D}\frac1{|I(a)|}\sum_{z_k\in T(a), |z_k|\ge R}\left(\frac{\mu(\Delta(z_k,r))}{(1-|z_k|)^{1+pn}}\right)^\frac{2}{2-p}|I(z_k)|\lesssim \ep^p
    $$
    for all $R\ge R_0$. The above inequality is a discrete version of,
    \begin{equation*}\begin{split}
\sup_{a\in\D, R\ge R_0}\frac{1}{|I(a)|}\int_{T(a)\cap \{R<|z|<1\}}|\Phi_\mu(z)|^{\left(\frac{2}{p}\right)'} \,\frac{dA(z)}{1-|z|}
\lesssim \ep^p.
\end{split}\end{equation*}
So, $\lim_{R\to 1^-}\left( \sup_{a\in\D}\frac{1}{|I(a)|}\int_{T(a)\cap \{R<|z|<1\}}|\Phi_\mu(z)|^{\left(\frac{2}{p}\right)'} \,\frac{dA(z)}{1-|z|}\right)=0$,
which is equivalent to $\lim_{|a|\to 1^-} \frac{1}{|I(a)|}\int_{T(a)}|\Phi_\mu(z)|^{\left(\frac{2}{p}\right)'} \,\frac{dA(z)}{1-|z|}=0.$

(iii) If $2<q<p$, then $s=\frac{p}{q}>1$ and $v=\frac{2}{q}<1$, and hence
\cite[Proposition $3$]{Lu90}
 yields
    $$
    \int_\T\left(\sup_{z_k\in\Gamma(\z)\setminus\overline{D(0,R)}}\frac{\mu(\Delta(z_k,r))}{(1-|z_k|)^{1+qn}}\right)^\frac{p}{p-q}\,dm(\z)<\ep^q,\quad
    \text{for all $R\ge R_0$.}
    $$
from which the assertion follows. The case $q=2$ is proved similarly by using Theorem~\ref{Thm:tent-spaces-duality} instead of
\cite[Proposition $3$]{Lu90}. This finishes the proof.
\end{proof}

Now we deal with the second part of Theorem~\ref{Theorem:DifferentiationOperatorcompact}(b).
\begin{theorem}
Let either $0<p<q<\infty$ or $2\le p=q<\infty$ and $n\in\N$,
and let $\mu$ be a positive Borel measure on $\D$. Then the following conditions are equivalent:
\begin{enumerate}
\item[\rm(i)] $D^{(n)}:H^p\to L^q(\mu)$ is compact;
\item[\rm(ii)] $\lim_{|z|\to 1^-}\frac{\mu\left(S(z)\right)}{(1-|z|)^{nq+\frac{q}{p}}}=0$;
\item[\rm(iii)] $\lim_{|z|\to 1^-}\frac{\mu\left(\Delta(z,r)\right)}{(1-|z|)^{nq+\frac{q}{p}}}=0$ for any fixed $r\in(0,1)$.
\end{enumerate}
\end{theorem}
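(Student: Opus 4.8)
The plan is to prove the cycle (i)$\Rightarrow$(iii)$\Rightarrow$(ii)$\Rightarrow$(i). Put $s=nq+\frac{q}{p}$ and note that in the present range---$p<q$, or $2\le p=q$---one always has $s>1$; this is the distinctive feature of case~(b), and together with (iii) it already forces $\mu(\D)<\infty$ (sum $\mu(\Delta(z_k,r))\lesssim(1-|z_k|)^{s}$ over a $\d$-lattice $\{z_k\}$ and compare with $\int_\D(1-|z|)^{s-2}\,dA(z)<\infty$). For the first implication fix $\gamma>1/p$ and, for $a\in\D$, use the normalized test functions $f_a(z)=(1-|a|^{2})^{\gamma-1/p}(1-\overline{a}z)^{-\gamma}$, which satisfy $\|f_a\|_{H^p}\asymp1$ and $f_a\to0$ uniformly on compact subsets of $\D$ as $|a|\to1^-$. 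A compact operator maps such a bounded, locally uniformly null family to a norm null one, since any $L^q(\mu)$-limit of a subsequence of $\{f_{a_j}^{(n)}\}$ vanishes $\mu$-a.e.\ by Cauchy's estimates; hence $\|f_a^{(n)}\|_{L^q(\mu)}\to0$. On the other hand $|f_a^{(n)}(z)|\asymp(1-|a|^{2})^{\gamma-1/p}|1-\overline{a}z|^{-\gamma-n}$ and $|1-\overline{a}z|\asymp1-|a|$ for $z\in\Delta(a,r)$, so $\mu(\Delta(a,r))(1-|a|)^{-s}\lesssim\|f_a^{(n)}\|_{L^q(\mu)}^{q}\to0$, which is (iii). (Using $S(a)$ in place of $\Delta(a,r)$, where also $|1-\overline{a}z|\asymp1-|a|$, gives (ii) at once; we still record (iii)$\Rightarrow$(ii) below in order to have a closed cycle.)

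\emph{(iii)$\Rightarrow$(ii)} is a Whitney box-covering argument. Given $\eta>0$, choose $R_1$ with $\mu(\Delta(z,r))<\eta(1-|z|)^{s}$ for $|z|>R_1$. For an arc $I$ with $|I|$ small, decompose $S(I)$ into its Whitney pieces at heights $2^{-j}|I|$, $j\ge0$; each piece has hyperbolic diameter $O(1)$, hence is covered by $O(1)$ pseudohyperbolic $r$-discs centred at distance $\asymp2^{-j}|I|$ from $\T$, so has $\mu$-mass $\lesssim\eta(2^{-j}|I|)^{s}$, and summing over the $\asymp2^{j}$ pieces at level $j$ yields $\mu(S(I))\lesssim\eta|I|^{s}\sum_{j\ge0}2^{j(1-s)}\asymp\eta|I|^{s}$ because $s>1$. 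Thus $\mu(S(I))/|I|^{s}\to0$ as the vertex of $S(I)$ approaches $\T$, i.e.\ (ii) holds. (The reverse (ii)$\Rightarrow$(iii) is easier still, as $\Delta(a,r)$ lies in a Carleson square of comparable size.)

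\emph{(ii)$\Rightarrow$(i).} By the known characterization of boundedness of $D^{(n)}:H^p\to L^q(\mu)$ in this range (\cite{Lu90,Lu93}; for $p<q$ it follows from \eqref{Eq:suharmonic-n-derivatives} together with the classical embedding $H^p\subset A^q_{q/p-2}$) there is a constant $C$ so that for every positive Borel measure $\sigma$ on $\D$,
$$\|g^{(n)}\|_{L^q(\sigma)}\le C\Bigl(\sup_{I}\sigma(S(I))/|I|^{s}\Bigr)^{1/q}\|g\|_{H^p}.$$
Let $\sigma_R=\mu|_{\{R<|z|<1\}}$. A box-covering argument dual to the previous one gives $\|\sigma_R\|:=\sup_I\sigma_R(S(I))/|I|^{s}\to0$ as $R\to1^-$: for arcs $I$ below the threshold supplied by (ii) one has $\sigma_R(S(I))\le\mu(S(I))<\eta|I|^{s}$, while for larger $I$ one covers $S(I)\cap\{|z|>R\}$ by $\asymp|I|/(1-R)$ Carleson squares of side $1-R$, applies (ii) to each, and absorbs the loss using $s-1>0$. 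Now take $\{f_k\}$ bounded in $H^p$, pass to a subsequence converging uniformly on compacta to $f\in H^p$, and set $g_k=f_{n_k}-f$, so that $g_k^{(n)}\to0$ uniformly on compact sets and $\sup_k\|g_k\|_{H^p}<\infty$. For fixed $R$,
$$\|g_k^{(n)}\|_{L^q(\mu)}^{q}=\int_{|z|\le R}|g_k^{(n)}|^{q}\,d\mu+\|g_k^{(n)}\|_{L^q(\sigma_R)}^{q}\le\int_{|z|\le R}|g_k^{(n)}|^{q}\,d\mu+C^{q}\|\sigma_R\|\,\|g_k\|_{H^p}^{q},$$
and letting $k\to\infty$ (the first term vanishes since $\mu$ is finite on $\{|z|\le R\}$ and $g_k^{(n)}\to0$ uniformly there) and then $R\to1^-$ shows $g_k^{(n)}\to0$ in $L^q(\mu)$, i.e.\ $D^{(n)}$ is compact.

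The main obstacle is the bookkeeping in the two box-covering steps---upgrading the pointwise/box \emph{little-oh} hypotheses into uniform smallness of the Carleson constant $\|\sigma_R\|$ of the tail measure---which is precisely where the inequality $s>1$ (valid throughout case (b) but failing in parts of case (a)) is used in an essential way; the remaining ingredients are a routine normal-families argument and the already-available boundedness characterization.
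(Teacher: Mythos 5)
Your proof is essentially correct, but it follows a genuinely different route from the paper's. For (i)$\Rightarrow$(ii)/(iii) you use the same test functions (with a general exponent $\gamma>1/p$ in place of the paper's $2/p$), and both proofs extract compactness from a normal-families splitting into a compact core and a boundary tail. The difference is in the sufficiency direction: you reduce (ii)$\Rightarrow$(i) to the \emph{quantitative} boundedness estimate $\|g^{(n)}\|_{L^q(\sigma)}\lesssim\bigl(\sup_I\sigma(S(I))/|I|^{s}\bigr)^{1/q}\|g\|_{H^p}$ applied to the tail measures $\sigma_R=\mu|_{\{R<|z|<1\}}$, together with two box-covering arguments exploiting $s=nq+q/p>1$ to convert the little-oh hypotheses into vanishing of the Carleson constant of $\sigma_R$. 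The paper never passes through condition (ii) at all: it proves (iii)$\Rightarrow$(i) directly, for $p<q$ via the subharmonicity estimate \eqref{Eq:suharmonic-n-derivatives}, Minkowski's inequality in continuous form and Duren's Carleson embedding $H^p\subset L^s(\mu^\star)$ with $d\mu^\star=(1-|\z|)^{s/p-2}dA$, and for $p=q\ge2$ via the Laplacian inequality \eqref{Eq:Laplacian-first-derivative} and the Hardy--Stein--Spencer identity. Your approach is cleaner at the level of bookkeeping (the tail-measure trick is modular), while the paper's is more self-contained. The one point you should shore up: your parenthetical justification of the quantitative boundedness estimate (subharmonicity plus $H^p\subset A^q_{q/p-2}$) only works for $p<q$; for $p=q\ge2$ that embedding degenerates ($\a=-1$), and the needed estimate is exactly Luecking's harder case, whose proof requires the Hardy--Stein--Spencer/Laplacian machinery that the paper redevelops --- so either cite that case of \cite{Lu90} precisely in its norm-quantitative form or supply the argument. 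Also note that the constant in the cited estimate must be uniform in the measure $\sigma$, which is true but is a property of the proofs rather than of the bare ``bounded iff box condition'' statement.
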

\begin{proof}
A proof of (i)$\Rightarrow$(ii) (and (i)$\Rightarrow$(iii)) can be obtained  by using the test functions $
    f_{a}(z)=\left(\frac{1-|a|^2}{(1-\overline{a}z)^2}\right)^{1/p},\, a\in\D,
    $
    and a regular reasoning. So it is omitted. \par It is enough to prove (iii)$\Rightarrow$(i).
We shall split the proof in two  cases.
\par{\textbf{Case $\mathbf{0<p<q<\infty}$}.}
The argument follows ideas from the proof of \cite[Theorem~3.1]{Luecking1985}. Let $r\in(0,1)$ be fixed, choose $s\in(p,q)$ and denote
    $$
    d\mu^\star(\z)=(1-|\z|)^{\frac{s}{p}-2}\,dA(\z),\quad \z\in\D.
    $$
 Let $\{f_k\}_{k=1}^\infty$ such that $\sup_k||f_k||_{H^p}<\infty$. Then, there is subsequence $\{f_{n_{k}}\}_{k=1}^\infty$ which converges uniformly on
compact subsets of $\D$ to  to an analytic function  $f$. Let denote $g_k=f_{n_k}-f$.
Fix $\ep>0$, by hypothesis there is $\rho$ such that
 $$\frac{\mu\left(\Delta(z,r)\right)}{(1-|z|)^{nq+\frac{q}{p}}}<\ep^q,\quad\text{if $\rho<|z|<1$}.$$
On the other hand, there is $k_0$ such that  $|g_k(z)|<\ep$ for any $k\ge k_0$ and $|z|\le \rho$.
So, bearing in mind \eqref{Eq:suharmonic-n-derivatives} and Minkowski's inequality in continuous form
\begin{equation*}
    \begin{split}
    \|g_k^{(n)}\|_{L^q(\mu)}^q&\lesssim\int_\D\left(\frac{1}{(1-|z|)^{2+ns}}\int_{\Delta(z,r)}|g_k(\z)|^s\,dA(\z)\right)^\frac{q}{s}\,d\mu(z)\\
    &\lesssim\left(\int_\D|g_k(\z)|^s\frac{(\mu(\Delta(\z,r)))^\frac{s}{q}}{(1-|\z|)^{ns}}\,dh(\z)\right)^\frac{q}{s}\\
    & \lesssim \ep^q \left(\int_{|\z|\le\rho}\frac{(\mu(\Delta(\z,r)))^\frac{s}{q}}{(1-|\z|)^{ns}}\,dh(\z)\right)^\frac{q}{s}
    + \ep^q\left(\int_{\rho<|\z|<1}|g_k(\z)|^s\,d\mu^\star(\z)\right)^\frac{q}{s}
    \\ & \lesssim \ep^q\left( \mu^\star(\D)^\frac{q}{s}
    + ||g_k||^q_{L^s(\mu^\star)}\right).
    \end{split}
    \end{equation*}
  Next, since  $\mu^\star(S(a))\lesssim(1-|a|)^\frac{s}{p}$ for $a\in\D$, by \cite[Theorem $9.4$]{Duren1970} $||g_k||_{L^s(\mu^\star)}\lesssim
  ||g_k||_{H^p}$, which together with the above inequalities implies that $D^{(n)}:H^p\to L^q(\mu)$ is compact.
 \par{\textbf{Case $\mathbf{q=p\ge 2}$}.}
 By \eqref{Eq:suharmonic-n-derivatives}, \eqref{Eq:Laplacian-first-derivative} and Fubini's theorem
 \begin{equation*}
    \begin{split}
    \|f^{(n)}\|_{L^p(\mu)}^p
    &\lesssim\int_\D\frac{1}{(1-|z|)^{2+(n-1)p}}\left(\int_{\Delta(z,\r)}|f'(\z)|^p\,dA(\z)\right)\,d\mu(z)\\
    &\lesssim\int_\D\frac{1}{(1-|z|)^{2+np}}\left(\int_{\Delta(z,\r)}\left(\int_{\Delta(\z,s)}\triangle|f|^p(u)\,dA(u)\right)\,dA(\z)\right)\,d\mu(z)\\
    &\lesssim\int_\D\frac{1}{(1-|z|)^{np}}\left(\int_{\Delta(z,r)}\triangle|f|^p(u)\,dA(u)\right)\,d\mu(z)\\
    &\asymp\int_\D\frac{\triangle|f|^p(u)}{(1-|u|)^{np}}\mu(\Delta(u,r))\,dA(u),
    \end{split}
    \end{equation*}
where $\r,s\in(0,1)$ are chosen sufficiently small depending only on $r$. Putting together this inequality with
the  Hardy-Stein-Spencer identity
$\|f\|_{H^p}^p=|f(0)|^p+
\frac{1}{2}\int_{\D}\triangle|f(z)|^p\,\log\frac{1}{|z|}\,dA(z)\,
$,  the proof
 can be finished as in the previous case.

\end{proof}

 The following result contains Theorem~\ref{carlesonq<p}.
\begin{theorem}\label{carlesonq<pextended}
Let $0<q<p<\infty$
and let $\mu$ be a positive Borel measure on $\D$. Then the following conditions are equivalent:
\begin{enumerate}
\item[\rm(i)] $I_d:H^p\to L^q(\mu)$ is compact;
\item[\rm(ii)] $I_d:H^p\to L^q(\mu)$ is bounded;
\item[\rm(iii)] The function
    $$
    \displaystyle \Psi_\mu(\z)=\int_{\D}\left(\frac{1-|z|}{|1-\overline{\z}z|}\right)^{\lambda}\frac{d\mu(z)}{1-|z|}
    $$
belongs to $L^{\frac{p}{p-q}}(\T,m)$ for all $\lambda>0$ large enough;
 \item[\rm(iv)] The function
    $
    B_\mu(\z)=\int_{\Gamma(\z)}\frac{d\mu(z)}{1-|z|}
    $
belongs to $L^{\frac{p}{p-q}}(\T,m)$;

 \item[\rm(v)] For each $0<r<1$, the function
    $$
    \z\mapsto\int_{\Gamma(\z)}\frac{\mu\left(\Delta(z,r)\right)}{(1-|z|)^3}\,dA(z)
    $$
belongs to $L^{\frac{p}{p-q}}(\T,m)$;
\item[\rm(vi)] $M(\mu)(z)=\sup_{z\in S(a)}\frac{\mu(S(a))}{1-|a|}\in L^{\frac{p}{p-q}}(\T,m)$.
\end{enumerate}
\end{theorem}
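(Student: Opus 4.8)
The plan is to route every equivalence through condition~(iv). Since a compact operator is bounded, (i)$\Rightarrow$(ii) is immediate, and (ii)$\Leftrightarrow$(iv) is Luecking's boundedness criterion \cite{Lu90} recalled in the Introduction. For (iii)$\Leftrightarrow$(iv) I would apply Lemma~\ref{Lemma:cone-integral-nu} with the exponent $\tfrac{p}{p-q}>1$ in place of $p$ and with $d\nu(z)=\tfrac{d\mu(z)}{1-|z|}$: then $\int_\D\bigl(\tfrac{1-|z|}{|1-\overline{\z}z|}\bigr)^{\lambda}\,d\nu(z)=\Psi_\mu(\z)$ and $\nu(\Gamma(\z))=B_\mu(\z)$, so \eqref{Eq:cone-integral-nu} gives $\int_\T\Psi_\mu^{p/(p-q)}\,dm\asymp\int_\T B_\mu^{p/(p-q)}\,dm+\mu(\{0\})$ for every $\lambda>\lambda_0\bigl(\tfrac{p}{p-q}\bigr)$, matching the wording of~(iii); here $\mu$ is finite (so $\nu$ is finite on compacta and the term $\mu(\{0\})$ is harmless) in each of the equivalent situations, e.g. because $\mu(\D)\asymp\|B_\mu\|_{L^1(\T)}$, or by testing (ii) on $f\equiv1$.

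For (iv)$\Leftrightarrow$(v), write $B_\mu^{\sigma}(\z)=\int_{\Gamma_\sigma(\z)}\tfrac{d\mu(z)}{1-|z|}$ for the cone function of aperture $\sigma$, so $B_\mu=B_\mu^{1/2}$. Using the symmetry $\chi_{\Delta(z,r)}(w)=\chi_{\Delta(w,r)}(z)$ and $1-|z|\asymp1-|w|$ on $\Delta(w,r)$, Fubini's theorem rewrites $\int_{\Gamma(\z)}\tfrac{\mu(\Delta(z,r))}{(1-|z|)^3}\,dA(z)$ as $\int_\D\bigl(\int_{\Gamma(\z)\cap\Delta(w,r)}\tfrac{dA(z)}{(1-|z|)^3}\bigr)\,d\mu(w)$, whose inner factor is always $\lesssim(1-|w|)^{-1}$ and is $\gtrsim(1-|w|)^{-1}$ as soon as $\Delta(w,r)\subset\Gamma(\z)$; hence $B_\mu^{\sigma'}(\z)\lesssim\int_{\Gamma(\z)}\tfrac{\mu(\Delta(z,r))}{(1-|z|)^3}\,dA(z)\lesssim B_\mu^{\sigma}(\z)$ for suitable apertures $\sigma'<\tfrac12<\sigma$ depending only on $r$, and the aperture-independence of the $L^{p/(p-q)}(\T)$-norm of a cone function (the remark after Lemma~\ref{Lemma:cone-integral-nu}) closes the equivalence.

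For (iv)$\Leftrightarrow$(vi) I would argue by duality on $\T$. If $z\in S(I)$, the arc $I(z)$ has length $\asymp1-|z|$ and at least half of it lies in $I$, so Fubini's theorem gives $\mu(S(I))\lesssim\int_I B_\mu\,dm$; hence $M(\mu)\lesssim\mathcal{M}(B_\mu)$ pointwise, where $\mathcal{M}$ is the Hardy--Littlewood maximal operator on $\T$, and its $L^{p/(p-q)}(\T)$-boundedness ($\tfrac{p}{p-q}>1$) gives $\|M(\mu)\|_{L^{p/(p-q)}(\T)}\lesssim\|B_\mu\|_{L^{p/(p-q)}(\T)}$. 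Conversely, pairing $B_\mu$ against $0\le g$ with $\|g\|_{L^{(p/(p-q))'}(\T)}\le1$, Fubini's theorem gives $\int_\T B_\mu g\,dm\lesssim\int_\D\bigl(\inf_{\z\in I(z)}\mathcal{M}g(\z)\bigr)\,d\mu(z)$; writing the open set $\{\mathcal{M}g>\lambda\}$ as the union of its disjoint component arcs $\{I_j\}$, noting that $\inf_{\z\in I(z)}\mathcal{M}g(\z)>\lambda$ forces $z\in S(I_j)$ for some $j$, and using $\mu(S(I_j))\le|I_j|\inf_{I_j}M(\mu)$ in a layer-cake integration, one gets $\int_\D\bigl(\inf_{I(z)}\mathcal{M}g\bigr)\,d\mu\le\int_\T\mathcal{M}g\cdot M(\mu)\,dm\le\|\mathcal{M}g\|_{L^{(p/(p-q))'}(\T)}\|M(\mu)\|_{L^{p/(p-q)}(\T)}\lesssim\|M(\mu)\|_{L^{p/(p-q)}(\T)}$, so taking the supremum over $g$ yields $\|B_\mu\|_{L^{p/(p-q)}(\T)}\lesssim\|M(\mu)\|_{L^{p/(p-q)}(\T)}$.

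It remains to prove (iv)$\Rightarrow$(i), which I regard as the substantive point. Let $\{f_k\}$ be bounded in $H^p$; a normal-families argument yields a subsequence $f_{n_k}\to f\in H^p$ uniformly on compacta, and setting $g_k=f_{n_k}-f$ we have $\sup_k\|g_k\|_{H^p}<\infty$ and $g_k\to0$ uniformly on compacta. Since $z\in\Gamma(\z)$ for every $\z\in I(z)$, one has $|g_k(z)|^q\le\tfrac1{|I(z)|}\int_{I(z)}N(g_k)(\z)^q\,dm(\z)$ with $N$ the non-tangential maximal function, so Fubini's theorem and $|I(z)|\asymp1-|z|$ give
\[
\|g_k\|_{L^q(\mu)}^q=\int_\D|g_k|^q\,d\mu\lesssim\int_\T N(g_k)(\z)^q\,B_\mu(\z)\,dm(\z).
\]
Fix $\ep>0$ and split $\mu=\mu\chi_{\{|z|\le\rho\}}+\mu_\rho$, $\mu_\rho=\mu\chi_{\{\rho<|z|<1\}}$: the first piece contributes at most $\mu(\D)\sup_{|z|\le\rho}|g_k(z)|^q\to0$ as $k\to\infty$ for each fixed $\rho$, while the second, by the displayed estimate for $\mu_\rho$, H\"older's inequality (exponents $\tfrac pq$ and $\tfrac p{p-q}$) and $\|N(g_k)\|_{L^p(\T)}\asymp\|g_k\|_{H^p}$, is at most $C\|g_k\|_{H^p}^q\,\|B_{\mu_\rho}\|_{L^{p/(p-q)}(\T)}$; since $B_{\mu_\rho}\le B_\mu\in L^{p/(p-q)}(\T)$ and $B_{\mu_\rho}\to0$ a.e. as $\rho\to1^-$, dominated convergence makes $\|B_{\mu_\rho}\|_{L^{p/(p-q)}(\T)}$ as small as desired. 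Choosing first $\rho$ and then $k$ shows $\|g_k\|_{L^q(\mu)}\to0$, i.e. $I_d$ is compact. I expect the genuine obstacle to be exactly this tail argument --- forcing the $L^{p/(p-q)}(\T)$-norm of the cone function of the truncated measure to vanish as $\rho\to1^-$ --- which is why the cone-function (equivalently, maximal-function) description of boundedness, rather than the pointwise Carleson-box condition valid for $q\ge p$, makes boundedness and compactness coincide in the range $q<p$.
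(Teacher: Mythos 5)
Your proposal is correct, and for the one substantive implication, (iv)$\Rightarrow$(i), it is essentially the paper's argument: both proofs truncate the measure to an annulus $\{\rho<|z|<1\}$, observe via dominated convergence that the $L^{p/(p-q)}(\T)$-norm of the cone function of the truncated measure tends to $0$ as $\rho\to1^-$, and combine this with the non-tangential maximal characterization $\|f\|_{H^p}^p\asymp\int_\T\bigl(\sup_{z\in\Gamma(\z)}|f(z)|\bigr)^p\,dm(\z)$ and H\"older with exponents $\tfrac pq,\tfrac p{p-q}$ (the paper leaves this last step as ``standard arguments''; you have written it out, correctly). Where you genuinely diverge is in the auxiliary equivalences: the paper disposes of (ii)$\Leftrightarrow$(iii)$\Leftrightarrow$(iv)$\Leftrightarrow$(vi) by citing Luecking's Section D and obtains (iv)$\Leftrightarrow$(v) from the kernel identity \eqref{s3} combined with Lemma~\ref{Lemma:cone-integral-nu}, whereas you reprove (iv)$\Leftrightarrow$(vi) from scratch (the pointwise bound $M(\mu)\lesssim\mathcal{M}(B_\mu)$ one way, and a layer-cake/Whitney-arc duality argument the other way) and prove (iv)$\Leftrightarrow$(v) by a direct Fubini computation squeezing the function in (v) between cone functions of two apertures, invoking aperture-independence of the tent norms. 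Your route buys a self-contained proof at the cost of length; the paper's route is shorter but leans on \cite{Lu90} and on the identity \eqref{s3}, which it reuses in the proof of (iv)$\Rightarrow$(i). Your observations that $\mu(\D)<\infty$ in all the equivalent situations (so that the term $\nu(\{0\})$ in Lemma~\ref{Lemma:cone-integral-nu} is harmless) and that $B_{\mu_\rho}\to0$ a.e.\ because $B_\mu<\infty$ a.e.\ are exactly the points one needs to make the dominated convergence step legitimate.
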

\begin{proof}
The equivalences $(ii)\Leftrightarrow(iii)\Leftrightarrow(iv) \Leftrightarrow(vi)$ follows from \cite[Section D]{Lu90}.
Next, fixed $0<r<1$, let us observe that for a positive measure $\nu$
\begin{equation}\begin{split}\label{s3}
\int_\D \left(\frac{1-|z|}{|1-\overline{\z}z|}\right)^{\lambda} d\nu(z)
     & =\int_\D\left(\int_{\Delta(u,r)}\left(\frac{1-|z|}{|1-\overline{\z}z|}\right)^{\lambda}\frac{1}{|\Delta(z,r)|}  d\nu(z)\right) dA(u)
    \\ & \asymp   \int_\D \left(\frac{1-|u|}{|1-\overline{\z}u|}\right)^{\lambda}\nu\left(\Delta(u,r)\right)\,dh(u),\quad\text{for all $\z\in\T$.}
    \end{split}
    \end{equation}
    So choosing $d\nu(z)=\frac{d\mu(z)}{1-|z|}$ and applying Lemma~\ref{Lemma:cone-integral-nu}, we have that $(iv)\Leftrightarrow (v)$.
\par Finally, let us see $(iv)\Rightarrow(i)$. By hypothesis and \eqref{s3}
$$
\int_\T \left(\int_\D \left(\frac{1-|u|}{|1-\overline{\z}u|}\right)^{\lambda}\frac{\mu\left(\Delta(u,r)\right)}{(1-|u|)^3}\,dA(u)
\right)^{\frac{p}{p-q}}\,dm(\z)<\infty.$$
Then,  by dominated convergence theorem  and  \eqref{s3},
\begin{equation*}\begin{split}\label{s4}
0 &=\lim_{R\to 1^-}\int_\T \left(\int_{\{R<|z|<1\}} \left(\frac{1-|z|}{|1-\overline{\z}z|}\right)^{\lambda} \frac{d\mu(z)}{1-|z|}\right)^{\frac{p}{p-q}}\,dm(\z)
\\ &\gtrsim \lim_{R\to 1^-} \int_\T \left(\int_{\Gamma(\z)\setminus\overline{D(0,R)}} \frac{d\mu(z)}{1-|z|}\right)^{\frac{p}{p-q}}\,dm(\z).
\end{split}\end{equation*}
This together with the equivalence
$||f||^p_{H^p}\asymp \int_\T\left(\sup_{z\in\Gamma(\z)}|f(z)|\right)^p\,dm(\z)$ \cite{FefSt}
and
standard arguments, yields that $I_d:H^p\to L^q(\mu)$ is compact.
\end{proof}

\end{document}